\documentclass[10pt, reqno]{amsart}

\usepackage{amsmath, amssymb,amsthm}
\usepackage{hyperref}

\usepackage{latexsym}
\usepackage{fancyhdr}
\usepackage{pdflscape}
\usepackage{longtable}
\usepackage{rotating}
\usepackage{verbatim}
\usepackage{tikz}
\usepackage{subfigure}
\usepackage{mathrsfs}
\makeatletter
\newcommand{\mylabel}[2]{#2\def\@currentlabel{#2}\label{#1}}
\makeatother

\usepackage{mdwlist}
\usepackage{color}
\usepackage{tensor}

\usepackage[all]{xy}
\newdir{ >}{{}*!/-10pt/@{>}}
\newdir^{ (}{{}*!/-5pt/@^{(}}
\newdir_{ (}{{}*!/-5pt/@_{(}}

\newcounter{mnotecount}[section]

\newcommand{\rmnote}[1]{}


\theoremstyle{plain}
\newtheorem*{theorem*}{Theorem}
\newtheorem{theorem}{Theorem}[section]
\newtheorem*{lemma*}{Lemma}
\newtheorem{lemma}[theorem]{Lemma}
\newtheorem*{proposition*}{Proposition}
\newtheorem{proposition}[theorem]{Proposition}
\newtheorem*{corollary*}{Corollary}
\newtheorem{corollary}[theorem]{Corollary}
\newtheorem*{claim*}{Claim}

\newtheorem*{conjecture*}{Conjecture}

\newtheorem*{question*}{Question}
\theoremstyle{definition}
\newtheorem{remark}[theorem]{Remark}
\newtheorem*{remark*}{Remark}
\newtheorem*{definition*}{Definition}

\newtheorem*{example*}{Example}

\sloppy

\def\A{\;\forall}
\def\E{\;\exists}
\def\p{\partial}
\def\o{\circ}

\let\on=\operatorname

\newcommand{\ol}{\overline}
\newcommand{\ul}{\underline}

\def\db{\ol \partial}

\def\al{\alpha}

\def\ga{\gamma}
\def\de{\delta}
\def\ep{\varepsilon}
\def\ve{\varepsilon}
\def\ze{\zeta}

\def\la{\lambda}

\def\rh{\rho}

\def\si{\sigma}

\def\ta{\tau}

\def\ph{\varphi}
\def\vh{\varphi}

\def\ps{\psi}
\def\om{\omega}

\def\Ga{\Gamma}

\def\La{\Lambda}

\def\Ph{\Phi}
\def\Ps{\Psi}
\def\Om{\Omega}

\def\C{\mathbb{C}}

\def\K{\mathbb{K}}
\def\N{\mathbb{N}}

\def\R{\mathbb{R}}
\def\Z{\mathbb{Z}}

\def\cB{\mathcal{B}}
\def\cC{\mathcal{C}}

\def\cE{\mathcal{E}}

\def\cH{\mathcal{H}}

\def\cK{\mathcal{K}}

\def\fM{\mathfrak{M}}
\def\fN{\mathfrak{N}}

\def\sD{\mathscr{D}}

\title[Nonlinear conditions for ultradifferentiability]{Nonlinear conditions for ultradifferentiability: a uniform approach}

\author[D.N.~Nenning, A.~Rainer, and G.~Schindl]{David Nicolas Nenning, Armin Rainer, and Gerhard Schindl}

\address{Fakult\"at f\"ur Mathematik, Universit\"at Wien, Oskar-Morgenstern-Platz~1, A-1090 Wien, Austria.}
\email{david.nicolas.nenning@univie.ac.at}
\email{armin.rainer@univie.ac.at}
\email{gerhard.schindl@univie.ac.at}

\begin{document}

	\begin{abstract}
		Recent work showed that a theorem of Joris (that a function $f$ is smooth if
		two coprime powers of $f$ are smooth) is valid in a wide variety of
		ultradifferentiable classes $\cC$. The core of the proof was essentially $1$-dimensional.
		In certain cases a multidimensional version resulted from subtle reduction
		arguments, but general validity, notably in the quasianalytic setting, remained open.
		In this paper we give a uniform proof which works in all cases and dimensions.
		It yields the result even on infinite dimensional Banach spaces and
		convenient vector spaces.
		We also consider more general nonlinear conditions, namely general analytic germs $\Ph$
		instead of the powers, and characterize when $\Ph \o f \in \cC$ implies $f \in \cC$.
	\end{abstract}

\thanks{AR was supported by FWF-Project P 32905-N, DNN and GS by FWF-Project P 33417-N}
\keywords{Joris theorem, pseudo-immersion,  ultradifferentiable classes, quasianalytic and non-quasianalytic, holomorphic approximation, almost analytic extension}
\subjclass[2020]{26E10, 
30E10, 
32W05, 
46E10,  
46E25, 
58C25} 

\maketitle

\section{Introduction}

A function $f$ is smooth provided that two relatively prime powers or, equivalently, two consecutive powers of $f$ are smooth,
by a theorem of Joris \cite{Joris82}.
In the recent paper \cite{Thilliez:2020ac} Thilliez showed that this result carries over to
Denjoy--Carleman classes of Roumieu type and,
by refining Thilliez's method, we proved in \cite{Nenning:2021wd} that it is valid in a wide variety of ultradifferentiable classes.
See the introduction in \cite{Nenning:2021wd} for more on the historical development.

In this analysis the dimension of the domain of $f$ had some substantial significance.
The proof in dimension one was based on holomorphic approximation
and utilized tools of complex analysis in one variable.
Multidimensional versions could be obtained only by some subtle reduction arguments (i.e., variants of Boman's theorem \cite{Boman67} and
Beurling-to-Roumieu reduction).
Consequently, general validity of the result in all dimensions remained open in cases, most notably the quasianalytic case,
where the suitable reduction tools are not available.
See the table in the introduction of \cite{Nenning:2021wd} for a summary of the known results
(in that paper the result in question was called  property ($\sD$)).

In the present paper we prove general validity in all dimensions and all cases.
In fact, the uniformity of our proof allows us to extend the result even to infinite dimensional Banach spaces and convenient vector spaces.
The crucial observation is that, in arbitrary dimension, holomorphic approximation in dimension one
can be replaced by uniform unidirectional holomorphic approximation combined with the polarization inequality.

Having proved that $\Ph \o f \in \cC$ implies $f \in \cC$ for $\Ph(t) = (t^j,t^{j+1})$, $j \in \N_{\ge 1}$, and $\cC$ a suitable ultradifferentiable class, it
is natural to ask what other $\Ph$ have this property. This question was studied by \cite{DuncanKrantzParks85} and \cite{JorisPreissmann87} in the $C^\infty$-setting.
By combining slightly adjusted arguments of \cite{JorisPreissmann87} with our result, we obtain a full characterization of the analytic germs
$\Ph : (\K,0) \to (\K^n,0)$ (where $\K$ is $\R$ or $\C$) having this property in terms of the support of the Taylor series of $\Ph$.
For example, it shows that $f \in \cC$ provided that $f^2 + f^3 \in \cC$ and $f^p \in \cC$, where $p$ is any positive integer.

The outline of the paper is the following.
Our main result is Theorem \ref{thm:main1}.
For its formulation we first need to define ultradifferentiable classes (Section \ref{sec:classes}) and
specify the required assumptions (Section \ref{sec:assumptions}).
The proof of Theorem \ref{thm:main1} is given in Section \ref{sec:proof}, and
in Section \ref{sec:Banach} we show that it extends to Banach spaces and even to convenient vector spaces.
Finally, in Section \ref{sec:nonlinear} we prove the characterization alluded to above.

\subsection{Ultradifferentiable classes} \label{sec:classes}

Let $\fM$ be a family of positive sequences $M=(M_k)_{k \ge 0}$.
Let $U \subseteq \R^d$ be an open subset.
The class $\cE^{\{\fM\}}(U)$ consists, by definition, of all complex valued functions $f \in C^\infty(U)$ with the following property:
for all compact subsets $K \subseteq U$ there exist $M \in \fM$ and $\rh>0$ such that
\begin{equation} \label{eq:def}
	\|f\|^M_{K,\rh} := \sup_{x \in K} \sup_{k \in \N}\frac{\|f^{(k)}(x)\|_{L^k(\R^d,\C)}}{\rh^k M_k} < \infty.
\end{equation}
Moreover, $\cE^{(\fM)}(U)$ consists of all complex valued functions $f \in C^\infty(U)$ such that
\eqref{eq:def} holds for all compact $K \subseteq U$, all $M \in \fM$, and all $\rh>0$.
Here $f^{(k)}$ denotes the Fr\'echet derivative of order $k$ of $f$ and
$\|f^{(k)}(x)\|_{L^k(\R^d,\C)} := \sup\{|f^{(k)}(x)(v_1,\ldots,v_k)| : \|v_i\| \le 1\}$
is the operator norm.
In view of the polarization inequality (cf.\ \cite[7.13.1]{KM97})
\begin{equation} \label{eq:polarization}
	\sup_{\|v\| \le 1} |d_v^k f(x)| \le \|f^{(k)}(x)\|_{L^k(\R^d,\C)} \le (2e)^k \sup_{\|v\| \le 1} |d_v^k f(x)|,
\end{equation}
where $d_v^k f(x) := \p_t^k f(x+tv)|_{t=0}$, we can equivalently use
\begin{equation} \label{eq:defv}
	\sup_{x \in K} \sup_{k \in \N} \sup_{\|v\|\le 1}\frac{|d_v^k f(x)|}{\rh^k M_k} < \infty
\end{equation}
instead of \eqref{eq:def} in the definition of
$\cE^{\{\fM\}}(U)$ and $\cE^{(\fM)}(U)$.

The global classes $\cB^{\{\fM\}}(U)$ and $\cB^{(\fM)}(U)$ are defined by taking the supremum over all $x \in U$
in \eqref{eq:def} (that is $\|f\|^M_{U,\rh} < \infty$) or equivalently \eqref{eq:defv}.
We shall also need the Banach space $\cB^M_\rh(U) := \{f \in C^\infty(U) : \|f\|^M_{U,\rh} < \infty\}$.

The classes $\cE^{\{\fM\}}$ and $\cB^{\{\fM\}}$ are said to be of \emph{Roumieu type},
$\cE^{(\fM)}$ and $\cB^{(\fM)}$ of \emph{Beurling type}.
By convention, we use $[\cdot]$ as a placeholder for $\{\cdot\}$ and $(\cdot)$.

\subsection{Admissible weight matrices} \label{sec:assumptions}

We need to impose some conditions on $\fM$.

\begin{definition*}
	A family $\fM$ of positive sequences $M=(M_k)_{k \ge 0}$ is said to be a \emph{weight matrix} if
	$\fM$ is totally ordered with respect to the pointwise order relation on sequences, and
	each $M \in \fM$ is log-convex with $M_0=1\le M_1$ and $M_k^{1/k} \to \infty$.
	A positive sequence $M$ with these properties is called a \emph{weight sequence}.
\end{definition*}

With $M$ we associate the sequence $m$ given by $k!\, m_k = M_k$.
Assuming that $m_k^{1/k} \to \infty$ we consider the function
\begin{equation*} \label{h}
	 h_{m}(t) := \inf_{k \in \N} m_k t^k, \quad \text{ for } t > 0, \quad \text{ and } \quad h_{m}(0):=0,
\end{equation*}
which is increasing, continuous on $[0,\infty)$, positive for $t>0$ and equals $1$ for large $t$.
For $t>0$ we put
\begin{align*}
	\label{counting2}
	\ol \Ga_{m}(t) &:= \min\{k : h_{m}(t) =  m_k t^k\}
\end{align*}
and
\begin{align*}
\ul \Ga_{m} (t) &:=  \min\Big\{k : \frac{m_{k+1}}{m_k}  \ge \frac{1}{t} \Big\}.
\end{align*}
Trivially, $\ul \Ga_m \le \ol \Ga_m$.
Equality $\ol \Ga_{m} = \ul \Ga_{m}$ holds,
if $m$ is log-convex.

For positive sequences $M$, $N$ we set
\begin{equation*}
\label{eq:rmatrixmg}
\on{mg}(M,N) := \sup_{j,k \ge 0, \, j+k\ge 1}\left(\frac{M_{j+k}}{N_jN_k}\right)^{1/(j+k)} \in (0,\infty].
\end{equation*}
Note that $\on{mg}(M,M) < \infty$ is the condition (M.2) of Komatsu \cite{Komatsu73} often called \emph{moderate growth}.

\begin{definition*}
	A weight matrix $\fM$ satisfying $m_k^{1/k} \to \infty$ for all $ M \in \fM$ is
	called
	\begin{itemize}
		\item $\{\text{\it admissible}\}$ or \emph{R-admissible}  if
		\begin{itemize}
			\item
			$\forall  M \in \fM \E  N \in \fM \E C\ge 1
			\A t>0 : \ol \Ga_{ n}(Ct) \le \ul \Ga_{ m}(t)$,
			\item
			$\forall M \in\fM \E N \in \fM : \on{mg}(M,N)<\infty.$
		\end{itemize}
		\item $(\text{\it admissible})$ or \emph{B-admissible}  if
		\begin{itemize}
			\item
			$\forall  M \in \fM \E  N \in \fM \E C\ge 1
			\A t>0 : \ol \Ga_{ m}(Ct) \le \ul \Ga_{ n}(t)$,
			\item
			$\forall M \in\fM \E N \in \fM : \on{mg}(N,M)<\infty.$
		\end{itemize}
	\end{itemize}
\end{definition*}

By our convention, $[\text{admissible}]$ stands for $\{\text{admissible}\}$ (i.e.\ R-admissible) in the Roumieu case and
$(\text{admissible})$ (i.e.\ B-admissible) in the Beurling case.

\begin{remark*}
		In the terminology of \cite{Nenning:2021wd} a weight matrix is $[\text{admissible}]$
		if and only if it is $[\text{regular}]$ and has $[\text{moderate growth}]$.
\end{remark*}

These are the minimal requirements needed for the tools used in our proof; see also
Remark \ref{rem:sharp} in which the necessity of these assumptions is discussed.
Note that [admissibility] of the weight matrix $\fM$ neither implies nor obstructs
quasianalyticity of the corresponding class $\cE^{[\fM]}$. In fact, (non-)quasianalyticity
plays no role in our analysis.

\subsection{Main results}

\begin{theorem}
	\label{thm:main1}
	Let $\fM$ be an [admissible] weight matrix.
	Let $U \subseteq \R^d$ be an open subset.
	Then a function $f : U \to \C$ belongs to
	$\cE^{[\fM]}(U)$ if $f^j, f^{j+1} \in \cE^{[\fM]}(U)$ for some positive integer $j$.
\end{theorem}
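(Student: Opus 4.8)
First I would note that since $\cE^{[\fM]}(U) \subseteq C^\infty(U)$, the classical theorem of Joris \cite{Joris82} already yields $f \in C^\infty(U)$; hence every directional derivative $d_v^k f(x)$ is meaningful and the task is purely quantitative. The plan is then to reduce the $d$-dimensional estimate to a one-variable one by the polarization inequality \eqref{eq:polarization}, and to settle the one-variable estimate by uniform unidirectional holomorphic approximation. Fix a compact $K \subseteq U$ and, shrinking the base points slightly, arrange that all segments $t \mapsto x+tv$ occurring below stay in $U$. By \eqref{eq:defv} it suffices to bound $|d_v^k f(x)|$ uniformly over $x \in K$ and unit vectors $v$, with the quantifiers on $M \in \fM$ and $\rh>0$ dictated by the Roumieu or Beurling convention. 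For fixed $x,v$ set $\phi(t) := f(x+tv)$. Since $d_v^k(f^j)(x) = (\phi^j)^{(k)}(0)$ and $|d_v^k(f^j)(x)| \le \|(f^j)^{(k)}(x)\|_{L^k(\R^d,\C)}$, the hypotheses $f^j,f^{j+1}\in\cE^{[\fM]}(U)$ furnish one-variable bounds for $\phi^j$ and $\phi^{j+1}$ whose constants are uniform in $x \in K$ and $\|v\|\le 1$. The entire problem thus reduces to a uniform one-variable assertion: for a suitable weight (Roumieu) or all weights (Beurling), whenever $\psi^j$ and $\psi^{j+1}$ obey the one-variable estimate with fixed constants, $\psi$ obeys one with constants depending only on the input data.

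For the one-variable core I would use holomorphic approximation together with the identity $\psi = \psi^{j+1}/\psi^{j}$. Away from the zeros of $\psi$ this is harmless: moderate growth makes the class inverse-closed, so $\psi$ inherits estimates governed by a lower bound on $|\psi|$. The substance lies near a zero $t_0$ of $\psi$. There I would, for a scale $r$ to be optimized against the derivative order $k$, construct holomorphic functions $G,H$ on the complex disk $D(t_0,r)$ approximating $\psi^j,\psi^{j+1}$ with error measured by $h_{m}(r/\rh)$. The first [admissibility] condition, relating $\ol\Gamma$ and $\ul\Gamma$, is precisely what converts the derivative bounds on $\psi^j,\psi^{j+1}$ into such approximation bounds, and at the end converts Cauchy bounds back into ultradifferentiable bounds (the direction of the inequality switching between the two types). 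From $G$ and $H$ I would build a holomorphic approximant $F$ of $\psi$, essentially the ratio $H/G$, and then estimate $\psi^{(k)}(t_0)$ by the Cauchy integral $\frac{k!}{2\pi i}\oint_{|z-t_0|=r} F(z)(z-t_0)^{-k-1}\,dz$, with $r$ chosen of order $1/k$ up to the weight. Moderate growth is used throughout to keep products and powers inside the class with controlled constants.

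The main obstacle is the complex-analytic recovery of $\psi$ from $G \approx \psi^j$ and $H \approx \psi^{j+1}$ in a neighborhood of the zero set, where $G$ is itself small or vanishes and the ratio $H/G$ is not obviously controlled. Here one must separate the regime in which $|\psi(t_0)|$ dominates the approximation scale, so that $G$ is bounded below and $H/G$ is genuinely holomorphic and close to $\psi$, from the regime in which $\psi$ is comparable to the error, using that $G$ is close to a $j$-th power to control its zeros and extract a well-behaved root. Balancing the radius $r$, the approximation error, and the size of $\psi$ so that the two regimes match is the delicate point, and this is exactly what the [admissibility] inequality between $\ol\Gamma$ and $\ul\Gamma$ is designed to make possible. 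The second, equally essential, difficulty is bookkeeping: every constant produced in the one-variable argument must be independent of the direction $v$ and the base point $x$, since only then does the polarization inequality \eqref{eq:polarization} upgrade the uniform directional bounds to the Fr\'echet-norm bounds \eqref{eq:def} defining membership in $\cE^{[\fM]}(U)$.
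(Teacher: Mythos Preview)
Your overall architecture matches the paper exactly: reduce to one variable by polarization, insist on uniformity of all constants in the direction $v$ and the base point $x$, and pass through holomorphic approximation of $\psi^j$ and $\psi^{j+1}$ on thin complex neighborhoods of a real segment. The paper packages this as Theorem~\ref{prop:332v} (uniform unidirectional holomorphic approximation in both directions) and Proposition~\ref{lem:corelemma}, and your emphasis on the bookkeeping of uniform constants is precisely the point of the present work over \cite{Nenning:2021wd}.

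Where your sketch diverges from the paper, and where it is genuinely incomplete, is the construction of the holomorphic approximant $F$ of $\psi$ near the zeros. You propose to handle the small-$\psi$ regime by ``using that $G$ is close to a $j$-th power to control its zeros and extract a well-behaved root''. This is not what the paper does, and it is not clear it can be made to work uniformly: $G$ is merely a holomorphic function close to $\psi^j$ on $[-1,1]$, and there is no reason its zeros in $\Om_\ep$ should have multiplicities divisible by $j$ or be located near those of $\psi$, so a holomorphic $j$-th root need not exist on $\Om_\ep$. The paper instead follows Thilliez \cite{Thilliez:2020ac}: first one shows, via Hadamard's three-lines theorem (Lemma~\ref{lem:324v}), that the holomorphic function $g_{\la,\ep}^{j+1}-h_{\la,\ep}^j$ is small not just on $[-1,1]$ but on all of $\Om_{\ep/2}$; then one forms the \emph{regularized quotient}
\[
u_{\la,\ep}=\ph_\ep\,\frac{\ol{g_{\la,\ep}}\,h_{\la,\ep}}{\max\{|g_{\la,\ep}|,r_\ep\}^2},
\]
which agrees with $h_{\la,\ep}/g_{\la,\ep}$ where $|g_{\la,\ep}|>r_\ep$ and is globally bounded, and finally one subtracts a solution of a $\bar\partial$-equation to obtain a genuinely holomorphic $f_{\la,\ep}$ with the required approximation bound. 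No root extraction and no case splitting on the size of $\psi$ is needed; Thilliez's Lemmas 4.2.1--4.2.4 supply the quantitative estimates directly. Your proposal would become a proof if you replaced the root-extraction idea by this regularized-quotient-plus-$\bar\partial$ construction.
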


If we apply the theorem to the weight matrix which consists but of a single weight sequence $M$
(with $m_k^{1/k} \to \infty$, $\ol \Ga_{m}(Ct) \le \ul \Ga_{m}(t)$ for some $C\ge 1$, and $\on{mg}(M,M)<\infty$),
then we immediately get a version for Denjoy--Carleman classes.

As another special case we obtain the following result for Braun--Meise--Taylor classes $\cE^{[\om]}$.
For their definition and why it is a special case of Theorem \ref{thm:main1} we refer the reader to \cite[Section 4.5]{Nenning:2021wd}
and the references therein.
By a \emph{weight function} we mean a continuous increasing function $\om:[0,\infty) \rightarrow [0,\infty)$
such that
\begin{itemize}
	\item $\om(2t) = O (\om(t))$ as $t \rightarrow \infty$,
	\item $\om(t) = o(t)$  as $t \rightarrow \infty$,
	\item $\log(t) = o(\om(t))$  as $t \rightarrow \infty$,
	\item $t \mapsto \om(e^t)=:\vh_\om(t)$ is convex on $[0,\infty)$.
\end{itemize}

\begin{corollary} \label{cor:main1}
		Let $\om$ be a concave weight function.
		Let $U \subseteq \R^d$ be an open subset.
		Then a function $f : U \to \C$ belongs to
		$\cE^{[\om]}(U)$ if $f^j, f^{j+1} \in \cE^{[\om]}(U)$ for some positive integer $j$.
\end{corollary}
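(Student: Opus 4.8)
The plan is to deduce the corollary from Theorem \ref{thm:main1} by realizing the Braun--Meise--Taylor class as the class of a weight matrix. Following \cite[Section 4.5]{Nenning:2021wd} and the references therein, to a weight function $\om$ one associates the weight matrix $\fM_\om = \{W^{(\la)} : \la > 0\}$ defined by $W^{(\la)}_k := \exp\big(\tfrac1\la\,\vh_\om^*(\la k)\big)$, where $\vh_\om^*$ is the Young conjugate of the convex function $\vh_\om(t) = \om(e^t)$, and one has the identification $\cE^{[\om]}(U) = \cE^{[\fM_\om]}(U)$. Consequently $f^j, f^{j+1} \in \cE^{[\om]}(U)$ means $f^j, f^{j+1} \in \cE^{[\fM_\om]}(U)$, and it suffices to prove that $\fM_\om$ is [admissible]; Theorem \ref{thm:main1} then gives $f \in \cE^{[\fM_\om]}(U) = \cE^{[\om]}(U)$.

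So the whole task reduces to checking that $\fM_\om$ is [admissible], i.e.\ (recalling the Remark above) [regular] with [moderate growth]. The structural requirements need no concavity: each $W^{(\la)}$ is a log-convex weight sequence (because $s \mapsto \vh_\om^*(\la s)$ is convex), with the normalizations $W^{(\la)}_0 = 1 \le W^{(\la)}_1$ and $(w^{(\la)}_k)^{1/k} \to \infty$ (where $k!\,w^{(\la)}_k = W^{(\la)}_k$) following from the weight-function axioms, in particular $\om(t) = o(t)$; and the family is totally ordered. The mixed moderate-growth condition is automatic as well: since $\vh_\om^*$ is convex, midpoint convexity gives $\vh_\om^*(2\la j) + \vh_\om^*(2\la k) \ge 2\,\vh_\om^*(\la(j+k))$, hence $W^{(\la)}_{j+k} \le W^{(2\la)}_j W^{(2\la)}_k$ and therefore $\on{mg}(W^{(\la)}, W^{(2\la)}) \le 1$ for every $\la$; the symmetric estimate $\on{mg}(W^{(\la/2)}, W^{(\la)}) \le 1$ handles the Beurling case. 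This settles the second bullet of both R- and B-admissibility, with $N = W^{(2\la)}$ resp.\ $N = W^{(\la/2)}$.

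What remains is the counting-function condition, and this is where concavity of $\om$ is used and which I expect to be the main obstacle. Since every $W^{(\la)}$ is log-convex we have $\ol\Ga_{w^{(\la)}} = \ul\Ga_{w^{(\la)}} =: \Ga_\la$, so the first bullet of R-admissibility asks, for each $\la$, for some $\mu$ and $C \ge 1$ with $\Ga_\mu(Ct) \le \Ga_\la(t)$ for all $t > 0$ (and symmetrically $\Ga_\la(Ct) \le \Ga_\mu(t)$ in the Beurling case). Expressing $\Ga_\la$ through the difference quotients $\tfrac1\la\big(\vh_\om^*(\la(k+1)) - \vh_\om^*(\la k)\big)$ and using the Legendre duality $(\vh_\om^*)' = (\vh_\om')^{-1}$, this becomes a comparison of these quotients across the scales $\la$ and $\mu$. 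Here concavity enters decisively: together with $\om(0) \ge 0$ it makes $\om$ subadditive and $\om'$ decreasing, which yields the two-sided control of $\om$ across dyadic scales needed to dominate $\Ga_\mu(C\,\cdot)$ by $\Ga_\la$. Carrying out this comparison --- the one genuinely computational point --- establishes [regularity], completes the proof that $\fM_\om$ is [admissible], and hence, via Theorem \ref{thm:main1}, the corollary.
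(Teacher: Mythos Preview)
Your approach is exactly the paper's: the corollary is not proved separately but deduced from Theorem~\ref{thm:main1} via the identification $\cE^{[\om]}=\cE^{[\fM_\om]}$ and the verification that $\fM_\om$ is [admissible], with the paper simply pointing to \cite[Section~4.5]{Nenning:2021wd} for the details you sketch.

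One small slip worth flagging: you write ``Since every $W^{(\la)}$ is log-convex we have $\ol\Ga_{w^{(\la)}}=\ul\Ga_{w^{(\la)}}$,'' but the paper's criterion for $\ol\Ga_m=\ul\Ga_m$ is log-convexity of the \emph{lowercase} sequence $m_k=M_k/k!$, not of $M$ itself; log-convexity of $W^{(\la)}$ alone does not give this. The point where concavity of $\om$ actually enters is precisely in ensuring that (up to passing to an equivalent weight matrix) each $w^{(\la)}$ is log-convex --- this is Fact~iii in Remark~\ref{rem:sharp}, which cites \cite[Theorem~4.8]{FurdosNenningRainer} and \cite[Proposition~3]{Rainer:2020aa}. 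Once that is in place, the counting-function condition becomes trivial (take $N=M$, $C=1$) rather than the ``genuinely computational'' comparison you anticipate. So your outline is correct in spirit, but the dependency on concavity is slightly misplaced: it is needed to get log-convexity of $w^{(\la)}$ (equivalently, strong log-convexity of $W^{(\la)}$), after which the $\Ga$-comparison is automatic.
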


Actually, for a given weight function $\om$, the conclusion of Corollary \ref{cor:main1} holds \emph{if and only if}
$\om$ is equivalent to a concave weight function,
as we shall see in Remark~\ref{rem:sharp}. Two weight functions $\om_i$, $i=1,2$, are said to be \emph{equivalent} if
$\om_1(t)=O(\om_2(t))$ and $\om_2(t)=O(\om_1(t))$ as $t \to \infty$
which holds if and only if
$\cE^{[\om_1]} = \cE^{[\om_2]}$.

\section{Proof} \label{sec:proof}

\subsection{Preliminaries}

\begin{lemma}[{\cite[Lemma 5.1]{Nenning:2021wd}}]
\label{lem:hmoderategrowth}
	Let $M,N$ be weight sequences satisfying $m_k^{1/k} \to \infty$, $n_k^{1/k} \to \infty$, and $C:=\on{mg}(M,N)<\infty$. Then
	\begin{align}
		\label{eq:hmoderategrowth}
		h_m(t) &\le C^jn_j t^j h_n(Ct), \quad t>0, ~j \in \N,
		\\
		\label{eq:mgsquare}
		h_m(t) &\le h_n\Big(\frac{eC}{2}t\Big)^2, \quad t>0.
	\end{align}
\end{lemma}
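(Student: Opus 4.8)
The plan is to translate the moderate-growth hypothesis $\on{mg}(M,N) = C$ into a single pointwise estimate on the sequences $m$ and $n$, and then to read off both inequalities directly from the description of $h$ as an infimum of monomials. Since $\on{mg}(M,N) = C$ means $(M_{j+k}/(N_jN_k))^{1/(j+k)} \le C$ for all $j,k \ge 0$ with $j+k \ge 1$, I would first record that $M_{j+k} \le C^{j+k} N_j N_k$ for all $j,k \ge 0$. Writing $M_\ell = \ell!\, m_\ell$ and $N_\ell = \ell!\, n_\ell$ and dividing by $(j+k)!$ then gives
\[
  m_{j+k} \le C^{j+k}\,\frac{j!\,k!}{(j+k)!}\, n_j n_k \le C^{j+k}\, n_j n_k, \qquad j,k \ge 0,
\]
using $\binom{j+k}{j} \ge 1$. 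This is the only consequence of the hypothesis I will need.

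For \eqref{eq:hmoderategrowth} I would fix $j$ and bound the infimum defining $h_m(t)$ by restricting attention to indices of the form $\ell = j+k$ with $k \ge 0$. For each such $k$ the displayed estimate yields $m_{j+k} t^{j+k} \le C^{j+k} n_j n_k\, t^{j+k} = C^j n_j t^j \cdot n_k (Ct)^k$, so that taking the infimum over $k \ge 0$ and pulling out the factors independent of $k$ gives
\[
  h_m(t) \le \inf_{k \ge 0} m_{j+k} t^{j+k} \le C^j n_j t^j \inf_{k\ge 0} n_k (Ct)^k = C^j n_j t^j\, h_n(Ct),
\]
which is exactly \eqref{eq:hmoderategrowth}.

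To obtain \eqref{eq:mgsquare} I would then simply take the infimum over $j$ in \eqref{eq:hmoderategrowth}. The left-hand side is independent of $j$, and $\inf_{j\ge 0} C^j n_j t^j = h_n(Ct)$, so this produces the sharper bound $h_m(t) \le h_n(Ct)^2$. Since $e/2 \ge 1$ we have $Ct \le \tfrac{eC}{2} t$, and $h_n$ is increasing, whence $h_n(Ct)^2 \le h_n(\tfrac{eC}{2}t)^2$, which is \eqref{eq:mgsquare}.

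The argument is essentially bookkeeping, and I expect no genuine obstacle. The one point requiring care is the manipulation of infima, namely the identity $(\inf_k a_k)(\inf_j b_j) = \inf_{j,k} a_k b_j$ for nonnegative sequences, which is what justifies both factoring $C^j n_j t^j$ out of the infimum in the first step and recognizing $h_n(Ct)^2$ in the second. It is perhaps worth noting that this route in fact delivers the slightly stronger estimate $h_m(t) \le h_n(Ct)^2$; the weaker constant $\tfrac{eC}{2}$ in \eqref{eq:mgsquare} is retained only because that is the form convenient for its later applications.
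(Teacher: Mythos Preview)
Your proof is correct. The paper does not supply its own argument for this lemma but merely cites \cite[Lemma 5.1]{Nenning:2021wd}; your derivation is self-contained and valid, and in fact yields the slightly sharper bound $h_m(t) \le h_n(Ct)^2$, from which \eqref{eq:mgsquare} follows by monotonicity of $h_n$ and $e/2>1$.
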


For $\ep>0$ let $\Om_\ep$ be the interior of the ellipse in $\C$
with vertices $\pm \cosh(\varepsilon)$ and co-vertices $\pm i\sinh(\varepsilon)$.
By $\cH(\Om_\ep)$ we denote the space of holomorphic functions on $\Om_\ep$.
And $\|g\|_{X} := \sup_{z \in X} |g(z)|$ denotes the supremum norm.

The following lemma is a simple application of Hadamard's three lines theorem.

\begin{lemma}[{\cite[Lemma 5.2]{Nenning:2021wd} and \cite[Lemma 3.2.4]{Thilliez:2020ac}}]
	\label{lem:324v}
	Let $M,N$ be two weight sequences satisfying $m_k^{1/k} \to \infty$, $n_k^{1/k} \to \infty$,
	and $C:=\on{mg}(M,N)<\infty$. Let $\ep>0$.
	Let $g \in \cH(\Om_\ep)\cap C^0(\ol \Om_\ep)$ and assume that there are constants $L,a_1,a_2>0$ such that
	\[
		\|g\|_{\Om_\ep} \le L,\quad  \|g\|_{[-1,1]} \le a_1h_m(a_2\varepsilon).
	\]
	Then with $a_3:=\max\{a_1,L\}$ and $a_4:=eCa_2$
	we have
	\[
	\|g\|_{\Om_{\ep/2}} \le a_3 h_n(a_4 \varepsilon).
	\]
\end{lemma}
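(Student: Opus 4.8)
The plan is to transplant the estimate from the ellipse to a strip by means of the cosine map, where the three lines theorem applies directly. Set $G(z) := g(\cos z)$. For $z = x+iy$ one computes $\cos z = \cos x \cosh y - i\sin x \sinh y$, so the horizontal line $\{\Im z = s\}$ is mapped by $\cos$ onto the full boundary ellipse of $\Om_{|s|}$ (as $(\cos x,\sin x)$ runs over the unit circle), while the open strip $S_\ep := \{z : |\Im z| < \ep\}$ is mapped into $\Om_\ep$: for $|y|<\ep$ the inequalities $\cosh^2 y < \cosh^2\ep$ and $\sinh^2 y < \sinh^2\ep$ force the image point strictly inside the ellipse. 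In particular $\{\Im z = 0\}$ is sent onto $[-1,1]$. Hence $G$ is holomorphic on $S_\ep$, continuous on $\ol{S_\ep}$, and, since $\cos(S_\ep)\subseteq\Om_\ep$, bounded there by $\|g\|_{\Om_\ep}\le L$.

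Next I record the boundary data on the edges of the strip. Writing $\mu(s) := \sup_{x\in\R}|G(x+is)| = \sup_{\Im z = s}|g(\cos z)|$, the hypotheses give $\mu(0)\le \|g\|_{[-1,1]}\le a_1 h_m(a_2\ep)$ on the central line and $\mu(\ep)\le\|g\|_{\Om_\ep}\le L$ on the upper edge. Since $G$ is bounded and holomorphic on $S_\ep$, Hadamard's three lines theorem asserts that $s\mapsto\log\mu(s)$ is convex on $[0,\ep]$; evaluating at the midpoint $s=\ep/2$ gives
\[
  \mu(\ep/2)\le \mu(0)^{1/2}\,\mu(\ep)^{1/2}\le \big(a_1 h_m(a_2\ep)\big)^{1/2}\,L^{1/2}.
\]

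It remains to identify the left-hand side with $\|g\|_{\Om_{\ep/2}}$ and to recast the right-hand side. Because $\{\Im z = \ep/2\}$ maps onto the entire boundary ellipse $\partial\Om_{\ep/2}$, while $g$ is holomorphic on $\Om_{\ep/2}$ and continuous up to the boundary, the maximum modulus principle yields $\|g\|_{\Om_{\ep/2}} = \sup_{\partial\Om_{\ep/2}}|g| = \mu(\ep/2)$. For the right-hand side I use $\sqrt{a_1 L}\le\max\{a_1,L\} = a_3$ together with \eqref{eq:mgsquare} of Lemma \ref{lem:hmoderategrowth}, which gives $h_m(a_2\ep)^{1/2}\le h_n(\tfrac{eC}{2}a_2\ep)$; since $h_n$ is increasing and $a_4 = eCa_2$, we have $h_n(\tfrac{eC}{2}a_2\ep)\le h_n(a_4\ep)$. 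Chaining these inequalities produces $\|g\|_{\Om_{\ep/2}}\le a_3 h_n(a_4\ep)$, as claimed.

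The only step requiring genuine care is the geometric setup: verifying that $\cos$ carries $S_\ep$ into $\Om_\ep$ and the bounding lines onto the correct ellipses. Once $G$ is in place, the application of the three lines theorem, the maximum principle, and the arithmetic with $h_m$ and $h_n$ are all routine.
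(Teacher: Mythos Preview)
Your proof is correct and follows exactly the approach the paper indicates: the paper does not spell out the argument but simply states that the lemma is ``a simple application of Hadamard's three lines theorem'' and cites \cite[Lemma 5.2]{Nenning:2021wd} and \cite[Lemma 3.2.4]{Thilliez:2020ac}, which proceed via the same cosine transplant to a strip. Your use of \eqref{eq:mgsquare} to pass from $h_m^{1/2}$ to $h_n$ and the elementary bound $\sqrt{a_1 L}\le\max\{a_1,L\}$ are precisely the intended arithmetic steps.
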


\subsection{Uniform unidirectional holomorphic approximation}

Let $\mathbb B := \{x \in \R^d : \|x\|\le 1\}$ be the closed unit ball in $\R^d$ and $\mathbb S := \{x \in \R^d : \|x\| =1\}$ the unit sphere.
For any open subset $U \subseteq \R^d$ consider
$V_U := \bigcup_{x \in U} (x+\mathbb B)$ which is again open.
If $f : V_U \to \C$ is a smooth function, then the composites
\[
		f_{x,v}(t):= f(x+tv), \quad t \in [-1,1],
\]
are well-defined and smooth for all $x \in U$ and $v \in \mathbb S$.
For notational convenience let $\La_U$ be the collection of all line segments $\la(t) = x+tv$, $t \in [-1,1]$,
where $x \in U$ and $v \in \mathbb S$, and $f_\la(t) := f_{x,v}(t) = f(x+tv)$.

The following theorem shows that ultradifferentiable bounds can be encoded by specific
uniform unidirectional holomorphic approximation. It is closely related to
\cite[Theorem 5.3]{Nenning:2021wd} (and \cite[Proposition 3.3.2]{Thilliez:2020ac}),
but special attention has to be paid to the uniformity (in $\la$) of all the estimates.

	\begin{theorem}
		\label{prop:332v}
		Let $U \subseteq \R^d$ be open and $f : V_U \to \C$ a smooth function.

			\thetag{i} Let $M^{(i)}$,  $1\le i \le 3$, be weight sequences with $(m^{(i)}_k)^{1/k} \rightarrow \infty$ and
			\begin{gather}
				\E B_1\ge 1 \A t>0 : \ol \Ga_{ m^{(2)}} (B_1 t) \le \ul \Ga_{ m^{(1)}}(t), \label{eq:countcomp}
				\\
				\E B_2\ge 1 \A j \in \N : m^{(2)}_{j+1} \le B_2^{j+1}m^{(3)}_j. \label{ass2}
			\end{gather}
			If $f \in \cB^{M^{(1)}}_{B_0}(V_U)$, then
			there exist constants $K \ge 1$,  $c_1,c_2>0$ and $0 < \ep_0\le \frac{1}2$
			and functions $f_{\la,\ve} \in \cH(\Om_\ep)\cap C^0(\ol \Om_\ep)$
			such that for all $0<\ep\le 2\ep_0$ and all $\la \in \La_U$,
			\begin{equation}
				\label{eq:holapprox}
				 \|f_{\la,\ve}\|_{\Om_\ep}\le K, \quad \|f_{\la}-f_{\la,\ve}\|_{[-1,1]}\le c_1 h_{m^{(3)}}(c_2\ve).
			\end{equation}
			The constants $K,c_1,c_2,\ep_0$ are independent of $\la$ and $\ep$, in particular, $c_2 = C B_0B_1$, where $C$ is an absolute constant.

			\thetag{ii} Let $N^{(i)}$, $1\le i \le 3$, be weight sequences with
			$(n^{(i)}_k)^{1/k} \rightarrow \infty$ and $\on{mg}(N^{(i)},N^{(i+1)})=D^{(i)}<\infty$.
			Assume that there exist constants $K \ge 1$,  $c_1,c_2>0$ and $0 < \ep_0\le \frac{1}2$
			and functions $f_{\la,\ve} \in \cH(\Om_\ep)\cap C^0(\ol \Om_\ep)$
			such that for all $0<\ep\le 2\ep_0$ and all $\la \in \La_U$,
			\begin{equation}
				\label{eq:holapprox2}
				 \|f_{\la,\ve}\|_{\Om_\ep}\le K, \quad \|f_{\la}-f_{\la,\ve}\|_{[-1,1]}\le c_1 h_{n^{(1)}}(c_2\ve),
			\end{equation}
			where the constants $K,c_1,c_2,\ep_0$ are independent of $\la$ and $\ep$.
			Then $f|_U \in \cB^{N^{(3)}}_\si(U)$,
			where $\si := \frac{4e^2 D^{(1)} D^{(2)}c_2}{a}$ and $a>0$ is an absolute constant.
	\end{theorem}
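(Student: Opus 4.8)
The plan is to reduce both implications to a single uniform family of one-variable problems along the segments $\la \in \La_U$, run a holomorphic-approximation construction for part (i) and a telescoping recovery argument for part (ii), and make sure at every step that the constants produced do not depend on $\la$. The starting point is that for $\la(t) = x+tv$ one has $f_\la^{(k)}(0) = d_v^k f(x)$, so by the polarization inequality \eqref{eq:polarization} the conclusion $f|_U \in \cB^{N^{(3)}}_\si(U)$ is equivalent, up to the absolute factor $(2e)^k$ and the harmless passage from $\|v\|\le 1$ to $\|v\|=1$, to a bound $\sup_{\la\in\La_U}|f_\la^{(k)}(0)| \le \on{const}\cdot\si^k N^{(3)}_k$; dually, $f\in\cB^{M^{(1)}}_{B_0}(V_U)$ yields $|f_\la^{(k)}(t)|\le \|f\|^{M^{(1)}}_{V_U,B_0}\,B_0^k M^{(1)}_k$ for all $t\in[-1,1]$, uniformly in $\la$. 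Thus part (i) is a $\la$-uniform one-variable approximation statement and part (ii) its converse.

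For part (i), for each fixed $\la$ I would build an almost analytic extension $F_\la$ of $f_\la$ on a fixed ellipse $\Om_{3\ep_0}$ whose $\db$-derivative is controlled by the $M^{(1)}$-data: the counting-function inequality \eqref{eq:countcomp} is exactly what converts the $M^{(1)}$-bound on the Taylor coefficients into a bound on $\db F_\la$ in terms of $m^{(2)}$ and the distance $|\Im z|$ to the real axis. Splitting the Cauchy--Pompeiu formula on $\Om_{3\ep_0}$, I would set
\[
 f_{\la,\ve}(z) := \frac{1}{2\pi i}\int_{\p\Om_{3\ep_0}}\frac{F_\la(\zeta)}{\zeta-z}\,d\zeta - \frac1\pi\int_{\Om_{3\ep_0}\setminus\Om_\ep}\frac{\db F_\la(\zeta)}{\zeta-z}\,dA(\zeta),
\]
which is holomorphic on $\Om_\ep$ since the remaining kernel has its pole outside $\Om_\ep$. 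Integrating over the fixed contour $\p\Om_{3\ep_0}$ (rather than over the moving $\p\Om_\ep$) gives the $\ve$-uniform bound $\|f_{\la,\ve}\|_{\Om_\ep}\le K$ for all $\ve\le 2\ep_0$, while Cauchy--Pompeiu yields $f_\la(t)-f_{\la,\ve}(t) = -\frac1\pi\int_{\Om_\ep}\db F_\la(\zeta)(\zeta-t)^{-1}\,dA(\zeta)$ for $t\in[-1,1]$; the single order lost to the area integration near the real axis is absorbed by the shift hypothesis \eqref{ass2}, producing $\|f_\la-f_{\la,\ve}\|_{[-1,1]}\le c_1 h_{m^{(3)}}(c_2\ve)$ with $c_2 = CB_0B_1$. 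Since $F_\la$, its $\db$-bound, and the contour are uniform in $\la$, so are $K,c_1,c_2,\ep_0$; this is the one-variable estimate of \cite[Theorem 5.3]{Nenning:2021wd} and \cite[Proposition 3.3.2]{Thilliez:2020ac}, reorganized to make $\la$-uniformity manifest.

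For part (ii), I would fix dyadic scales $\ve_j = \ep_0/2^j$ and telescope $f_\la = f_{\la,\ve_0} + \sum_{j\ge 0}(f_{\la,\ve_{j+1}}-f_{\la,\ve_j})$ on $[-1,1]$, convergent because $h_{n^{(1)}}(c_2\ve_j)\to 0$. Each $g_j := f_{\la,\ve_{j+1}}-f_{\la,\ve_j}$ is holomorphic on $\Om_{\ve_{j+1}}$ with $\|g_j\|_{\Om_{\ve_{j+1}}}\le 2K$ and $\|g_j\|_{[-1,1]}\le 2c_1 h_{n^{(1)}}(2c_2\ve_{j+1})$, so Lemma \ref{lem:324v} (with $C=D^{(1)}$) upgrades this to $\|g_j\|_{\Om_{\ve_{j+1}/2}}\le a_3 h_{n^{(2)}}(2eD^{(1)}c_2\ve_{j+1})$. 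A Cauchy estimate at the midpoint $t=0$, where the distance to $\p\Om_{\ve_{j+1}/2}$ is $\ge a\ve_{j+1}$, then bounds $|g_j^{(k)}(0)|$ by $a_3 h_{n^{(2)}}(2eD^{(1)}c_2\ve_{j+1})\,k!/(a\ve_{j+1})^k$, and \eqref{eq:hmoderategrowth} of Lemma \ref{lem:hmoderategrowth} (with $C=D^{(2)}$, $j=k$) rewrites the numerator so the $\ve_{j+1}^{-k}$ cancels, leaving $a_3\, N^{(3)}_k\,(2eD^{(1)}D^{(2)}c_2/a)^k\, h_{n^{(3)}}(2eD^{(1)}D^{(2)}c_2\ve_{j+1})$. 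The sum over $j$ converges since $h_{n^{(3)}}(c\,2^{-j})\le n^{(3)}_1 c\,2^{-j}$, and together with a Cauchy bound for $f_{\la,\ve_0}$ on $\Om_{\ep_0}$ this gives $|f_\la^{(k)}(0)|\le \on{const}\cdot(2eD^{(1)}D^{(2)}c_2/a)^k N^{(3)}_k$ uniformly in $\la$; polarization absorbs the factor $(2e)^k$ and yields $f|_U\in\cB^{N^{(3)}}_\si(U)$ with $\si = 4e^2D^{(1)}D^{(2)}c_2/a$.

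The principal obstacle lies in part (ii): the hypothesis controls $f_\la-f_{\la,\ve}$ only in sup-norm on $[-1,1]$, so its derivatives cannot be estimated directly, and naively differentiating the approximants produces Cauchy factors $k!/\ve^k$ with nothing to beat the factorial. The resolution is to combine Lemma \ref{lem:324v}, which propagates the smallness of each $g_j$ from the segment into a full ellipse and thereby legitimizes Cauchy estimates, with the moderate-growth inequality \eqref{eq:hmoderategrowth}, which is precisely tuned to turn $h_{n^{(2)}}(c\ve)/\ve^k$ into a clean $N^{(3)}_k$-bound carrying a summable residual weight $h_{n^{(3)}}(c\ve_{j+1})$. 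In part (i) the only delicate points are securing the $\ve$-uniform bound $K$ by integrating over a fixed outer contour and accounting for the single order lost in the area integral, which is exactly what \eqref{ass2} supplies; all remaining estimates are uniform in $\la$ by construction, which is the genuinely new feature compared with the existing one-dimensional statements.
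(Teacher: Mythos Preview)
Your proposal is correct and follows the paper's approach: part (i) via an almost analytic extension of $f_\la$ followed by a $\db$-correction on $\Om_\ep$ (you split Cauchy--Pompeiu over a fixed outer ellipse, while the paper subtracts the Cauchy transform $\cK * (\db F_\la\,\mathbf{1}_{\Om_\ve})$ after a global cutoff---equivalent packagings of the same idea), and part (ii) via the identical dyadic telescoping combined with Lemma~\ref{lem:324v}, Cauchy estimates, \eqref{eq:hmoderategrowth}, and polarization. One minor point of bookkeeping: in the paper both hypotheses \eqref{eq:countcomp} and \eqref{ass2} are consumed inside the almost analytic extension (\cite[Proposition~3.12]{FurdosNenningRainer}) to produce $|\db F_\la(z)|\le c_1\,h_{m^{(3)}}(c_2\,d(z,[-1,1]))$ directly, after which the area integral against $1/|\zeta-t|$ contributes only a bounded constant rather than a genuine loss of one order---this does not affect your conclusion.
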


	\begin{proof}
	(i)
	The assumption $f \in \cB^{M^{(1)}}_{B_0}(V_U)$ implies
	\begin{equation*} \label{eq:asslines}
		\sup_{\la \in \La_U} \|f_{\la}^{(k)}\|_{[-1,1]} \le C_0 B_0^{k} M^{(1)}_k, \quad k \in \N.
	\end{equation*}
	Then there are, by \cite[Proposition 3.12]{FurdosNenningRainer} (thanks to \eqref{eq:countcomp} and \eqref{ass2}),
	constants $c_1,c_2>0$ and a function $F_{\la} \in C^1_c(\C)$
	extending $f_{\la}$ such that
	\begin{equation}
	\label{eq:dbext}
	|\db F_{\la} (z)| \le c_1 h_{m^{(3)}}(c_2 d(z,[-1,1])), \quad z \in \C.
	\end{equation}
	Note that the constants $c_1=c_1(C_0, B_0,B_1,B_2)$ and $c_2=12B_0B_1$ are independent of $\la$.
	By multiplication with a suitable cut-off function, we may assume that the support of $F_{\la}$ is contained in the
	disk $D$ centered at $0$ with radius $2$.
	Thus,
	\[
		F_{\la}(z) = \frac{1}{2 \pi i}\int_D \frac{\db F_{\la}(\ze)}{\ze -z} \, d \ze \wedge d\ol \ze, \quad z \in \C,
	\]
	and hence, in view of \eqref{eq:dbext}, $\|F_{\la}\|_\C$ is uniformly bounded in $\la$.

	The function
	$w_{\la,\ve}:=  \db F_{\la} \,\mathbf{1}_{\Om_\ve}$ satisfies
	\[
		\|w_{\la,\ep}\|_{\C} \le c_1 h_{m^{(3)}}(Cc_2 \ep),
	\]
	where $C>0$ is an absolute constant such that $d(z,[-1,1]) \le C\ep$ for $z \in \Om_\ep$.
	Then the bounded continuous function $v_{\la,\ve} = \cK * w_{\la,\ep}$, where $\cK$ is the Cauchy kernel in $\C$,
	satisfies $\db v_{\la,\ve} = w_{\la,\ve}$ in the distributional sense in $\C$ and
	$\|v_{\la,\ve}\|_{\C} \le C\,\|w_{\la,\ve}\|_{\C}$, where $C>0$ again is an absolute constant.
	So $f_{\la,\ve} :=F_{\la}- v_{\la,\ve}$ is holomorphic on $\Om_\ve$, continuous on $\ol \Om_\ep$, and fulfills \eqref{eq:holapprox}.

	(ii)
	For $\la \in \La_U$ and $0<\ep \le \ep_0$
	consider $g_{\la,\ep} := f_{\la,\ve} - f_{\la,2\ve} \in \cH(\Om_{\varepsilon}) \cap C^0(\ol \Om_{\varepsilon})$
	which satisfy $\|g_{\la,\ep}\|_{\Om_\ep} \le 2 K$  and
	$\|g_{\la,\ve}\|_{[-1,1]} \le 2c_1 h_{n^{(1)}}(2c_2\ve)$,
	by \eqref{eq:holapprox2}.
	Then
	Lemma~\ref{lem:324v} implies, with $c_3:= 2\max\{c_1,K\}$,
	\[
		\|g_{\la,\ve}\|_{\Om_{\varepsilon/2}} \le c_3 \, h_{n^{(2)}}(2eD^{(1)}c_2 \ve),
	\]
	for all $\la \in \La_U$ and $0<\ep \le \ep_0$.
	There exists a (universal) constant $a>0$ such that the closed disk with
	radius $a\ve$ around any $t \in [-\frac{1}{2},\frac{1}{2}]$ is contained in $\Om_{\varepsilon/2}$.
	The Cauchy estimates and \eqref{eq:hmoderategrowth} yield
	\begin{align*}
		\| g_{\la,\ve}^{(j)}\|_{[-\frac{1}{2},\frac{1}{2}]} &\le \frac{c_3\,j!\,   h_{n^{(2)}}(2eD^{(1)}c_2\varepsilon)}{(a\ep )^{j}}
		\\
		&\le c_3 \Big(\frac{2 eD^{(1)} D^{(2)} c_2}{a}\Big)^j  N_j^{(3)}\,
		h_{n^{(3)}}(2eD^{(1)} D^{(2)}c_2\varepsilon),
	\end{align*}
	which means that
	\[
		\| g_{\la,\ep}\|^{N^{(3)}}_{[-\frac{1}{2},\frac{1}{2}],\rh} \le c_3\,  h_{n^{(3)}}(2eD^{(1)} D^{(2)}c_2\varepsilon)
	\]
	for all $\la \in \La_U$ and $0<\ep \le \ep_0$,
	where
	$\rh := \frac{2eD^{(1)} D^{(2)}c_2}{a}$.
	In an analogous way the first bound in \eqref{eq:holapprox2} gives
	\begin{align*}
		\| f_{\la,\ve_0}^{(j)}\|_{[-\frac{1}{2},\frac{1}{2}]} &\le \frac{K}{(a\ve_0)^{j}}  j!
		\le
		A K \, \rh^j N^{(3)}_j.
	\end{align*}
	Indeed, the assumption $(n^{(3)}_k)^{1/k} \rightarrow \infty$ entails that
	for all $\ta>0$ there exists $A$ such that $j! \le A\, \ta^j N^{(3)}_j$ for all $j$;
	so take $\ta:= 2eD^{(1)} D^{(2)}c_2 \ep_0$.
	It follows that
	\[
	g_{\la} := f_{\la,\ve_0} + \sum_{j = 1}^\infty g_{\la,\ve_0 2^{-j}}
	= f_{\la,\ve_0} + \sum_{j = 1}^\infty (f_{\la,\ve_0 2^{-j}} - f_{\la,\ve_02^{-j+1}})
	\]
	converges absolutely in the Banach space $\cB^{N^{(3)}}_{\rh}([-\frac{1}{2},\frac{1}{2}])$ for each $\la \in \La_U$,
	and
	\begin{equation} \label{eq:goal}
		\sup_{\la \in \La_U} \|g_{\la}\|^{N^{(3)}}_{[-\frac{1}{2},\frac{1}{2}],\rh}  < \infty.
	\end{equation}
	We have $f_{\la} = g_{\la}$ on $[-\frac{1}{2},\frac{1}{2}]$, since
	$g_{\la} =  f_{\la,\ve_02^{-k}} + \sum_{j = k+1}^\infty (f_{\la,\ve_0 2^{-j}} - f_{\la,\ve_02^{-j+1}})$
	for every $k \in \N$, and so,
	for $t \in [-\frac{1}{2},\frac{1}{2}]$,
	\[
	|f_{\la}(t)-g_{\la}(t)| \le |f_{\la}(t)-f_{\la,\ve_02^{-k}}(t)|
	+ \Big|\sum_{j = k+1}^\infty (f_{\la,\ve_0 2^{-j}}(t) - f_{\la,\ve_02^{-j+1}}(t))\Big|
	\]
	which tends to $0$ as $k \to \infty$, by \eqref{eq:holapprox2} and absolute convergence of the sum.
	Now if $\la(t) = x+tv$ then
	$d^k_v f(x) = f^{(k)}_{\la}(0) = g^{(k)}_{\la}(0)$.
	Thus, \eqref{eq:goal} and the polarization inequality \eqref{eq:polarization},
	imply $f|_U \in \cB^{N^{(3)}}_\si(U)$ with $\si:= 2 e \rh$.
\end{proof}

\subsection{Proof of Theorem \ref{thm:main1}}
The next proposition extends \cite[Lemma 6.1]{Nenning:2021wd}; again uniformity in $\la$ is crucial.

\begin{proposition}
	\label{lem:corelemma}
	Let $U \subseteq \R^d$ be open and $f : V_U \to \C$ any function.
	Let $j$ be a positive integer.
	Let $M^{(i)}$, $1 \le i \le k$ with $k := \lceil\log_2(j(j+1))\rceil+7$,
	be weight sequences satisfying $(m^{(i)}_\ell)^{1/\ell} \rightarrow \infty$
	and
	\begin{gather*}
	\E B\ge 1 \A t>0 : \ol \Ga_{ m^{(2)}} (B t) \le \ul \Ga_{ m^{(1)}}(t),\\
	\on{mg}(M^{(i)},M^{(i+1)}) <\infty, \quad \text{ for }2 \le i \le k-1.
	\end{gather*}
	There is a constant $D>0$ such that $f^j,f^{j+1} \in \cB^{M^{(1)}}_{\rh}(V_U)$ implies $f \in \cB^{M^{(k)}}_{D\rh}(U)$.
	As a consequence
	$f^j,f^{j+1} \in \cB^{[M^{(1)}]}(V_U)$
	implies $f \in \cB^{[M^{(k)}]}(U)$.
\end{proposition}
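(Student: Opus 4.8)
The plan is to reduce the multidimensional statement to the one-variable complex-analytic core of \cite[Lemma 6.1]{Nenning:2021wd} (see also \cite{Thilliez:2020ac}), carried out \emph{uniformly} over all line segments $\la \in \La_U$, and then to reassemble the directional bounds by means of the polarization inequality \eqref{eq:polarization}. Throughout, the essential point---and what distinguishes the statement from its predecessor---is that every constant produced along the way must depend only on $j$, the weight sequences, and the radius $\rh$, but never on the individual $\la$.

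\textbf{Encoding.} First I would observe that $f^j, f^{j+1} \in \cB^{M^{(1)}}_{\rh}(V_U)$ restrict, along each segment $\la(t) = x+tv$, to the smooth functions $(f^j)_\la = (f_\la)^j$ and $(f^{j+1})_\la = (f_\la)^{j+1}$ on $[-1,1]$, with directional derivative bounds that are uniform in $\la$ by the very definition of the global norm $\|\cdot\|^{M^{(1)}}_{V_U,\rh}$. The hypotheses on $M^{(1)}, M^{(2)}, M^{(3)}$ are exactly \eqref{eq:countcomp} together with \eqref{ass2} (the latter being a consequence of $\on{mg}(M^{(2)},M^{(3)}) < \infty$), so Theorem \ref{prop:332v}\thetag{i} applies to both powers. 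It furnishes, uniformly in $\la$ and $\ve$, holomorphic functions $P_{\la,\ve}, Q_{\la,\ve} \in \cH(\Om_\ve) \cap C^0(\ol\Om_\ve)$ with $\|P_{\la,\ve}\|_{\Om_\ve}, \|Q_{\la,\ve}\|_{\Om_\ve} \le K$ and
\[
\|(f_\la)^j - P_{\la,\ve}\|_{[-1,1]} \le c_1 h_{m^{(3)}}(c_2\ve), \qquad \|(f_\la)^{j+1} - Q_{\la,\ve}\|_{[-1,1]} \le c_1 h_{m^{(3)}}(c_2\ve),
\]
all constants being independent of $\la$.

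\textbf{The one-variable core.} The heart of the argument is to pass, for fixed $\la$, from holomorphic approximants of $(f_\la)^j$ and $(f_\la)^{j+1}$ to a holomorphic approximant $R_{\la,\ve}$ of $f_\la$ itself. Since $j$ and $j+1$ are coprime one has $f_\la = (f_\la)^{j+1}/(f_\la)^j$ away from the zeros of $f_\la$, while near those zeros $f_\la$ is itself small; the content of \cite[Lemma 6.1]{Nenning:2021wd} is that this inversion can be realized at the level of the holomorphic approximants, the behaviour at the zeros being controlled by one-variable complex analysis on the ellipses $\Om_\ve$. Recovering $f_\la$ amounts to inverting the power map, which following \cite[Lemma 6.1]{Nenning:2021wd} is achieved by $\lceil\log_2(j(j+1))\rceil$ successive root-extraction steps (reflecting that $f_\la$ is reconstructed from a balanced approximant of $(f_\la)^{j(j+1)} = ((f_\la)^j)^{j+1} = ((f_\la)^{j+1})^j$). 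Each such step degrades the approximation rate by one application of the three-lines estimate Lemma \ref{lem:324v}, hence consumes one weight sequence through a single $\on{mg}$ condition; this is the origin of both the index $k = \lceil\log_2(j(j+1))\rceil + 7$ and the requirement $\on{mg}(M^{(i)},M^{(i+1)}) < \infty$ for $2 \le i \le k-1$. The outcome is a holomorphic $R_{\la,\ve}$ with $\|R_{\la,\ve}\|_{\Om_\ve} \le K'$ and $\|f_\la - R_{\la,\ve}\|_{[-1,1]} \le c_1' h_{m^{(k-2)}}(c_2'\ve)$. The delicate point, and the one I expect to be the main obstacle, is that this construction---together with Lemma \ref{lem:hmoderategrowth} and Lemma \ref{lem:324v}---must be run so that $K', c_1', c_2'$ depend only on $K, c_1, c_2$ and the sequences, never on $\la$; this holds because the inputs are $\la$-uniform and every operation (multiplication of approximants, root extraction, three-lines interpolation) preserves uniformity, but verifying it is exactly where care is required.

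\textbf{Decoding.} Finally, the approximation data for $f_\la$ are precisely the hypotheses \eqref{eq:holapprox2} of Theorem \ref{prop:332v}\thetag{ii} with $(N^{(1)}, N^{(2)}, N^{(3)}) = (M^{(k-2)}, M^{(k-1)}, M^{(k)})$, whose $\on{mg}$ assumptions are available. That theorem yields $f|_U \in \cB^{M^{(k)}}_{\si}(U)$ with $\si = D\rh$, $D$ depending only on the stated data, the polarization inequality \eqref{eq:polarization} being what converts the directional identities $d_v^k f(x) = f_\la^{(k)}(0)$ into the operator-norm bounds defining $\cB^{M^{(k)}}$. The closing \emph{consequence} for the non-Banach classes is then routine: in the Roumieu case the hypothesis provides some admissible $\rh$ and the conclusion holds with radius $D\rh$; in the Beurling case it holds for every $\rh$, and the factor $D$ is harmless. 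In summary, the obstacle lies neither in the encoding nor in the decoding, both packaged in Theorem \ref{prop:332v}, but in executing the one-variable core with constants strictly uniform in $\la$, since it is precisely this uniformity that upgrades the line-wise estimates into a genuine bound on the Fréchet derivatives of $f$.
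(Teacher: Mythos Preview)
Your three-stage architecture---encode via Theorem \ref{prop:332v}(i), run the one-variable core of \cite[Lemma 6.1]{Nenning:2021wd} uniformly in $\la$, decode via Theorem \ref{prop:332v}(ii)---is exactly the paper's, and your bookkeeping of which weight sequences are consumed at each stage ($M^{(1)},M^{(2)},M^{(3)}$ for the encoding, $M^{(k-2)},M^{(k-1)},M^{(k)}$ for the decoding) is correct. But your account of the one-variable core is wrong in a way that would derail you if you tried to execute it.

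The approximant of $f_\la$ is \emph{not} produced by root-extraction from an approximant of $(f_\la)^{j(j+1)}$; a holomorphic $j(j+1)$-th root need not exist near the zeros. Instead, following Thilliez, one forms the regularized quotient
\[
u_{\la,\ve}=\ph_\ve\,\frac{\ol g_{\la,\ve}\,h_{\la,\ve}}{\max\{|g_{\la,\ve}|,r_\ve\}^2},
\]
which is essentially the quotient $h_{\la,\ve}/g_{\la,\ve}$ you yourself mention first and which approximates $f_\la$ \emph{directly}. It is not holomorphic, so one subtracts a $\bar\p$-correction $v_{\la,\ve}$. The integer $j(j+1)$ enters through the $\bar\p$-estimate \cite[Lemma 4.2.4]{Thilliez:2020ac}, which controls $v_{\la,\ve}$ by $\de_\ve^{1/s}$ for any real $s>j(j+1)$, where $\de_\ve$ is the $\Om_{\ve/2}$-bound on $g_{\la,\ve}^{j+1}-h_{\la,\ve}^j$. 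The $\lceil\log_2(j(j+1))\rceil$ iterations are therefore \emph{not} applications of the three-lines lemma (that lemma is used once, to pass the estimate on $g_{\la,\ve}^{j+1}-h_{\la,\ve}^j$ from $[-1,1]$ to $\Om_{\ve/2}$), but repeated uses of \eqref{eq:mgsquare} to convert $h_{m^{(4)}}(c\ve)^{1/s}$ into $h_{m^{(k-2)}}(c'\ve)$; each use halves the exponent and costs one $\on{mg}$ condition. You also omit the preliminary appeal to the classical Joris theorem, needed so that $f$ is smooth before one can speak of $d_v^k f(x)=f_\la^{(k)}(0)$ in the decoding step.
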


\begin{proof}
	By the classical theorem of Joris, we may conclude that $f$ is smooth.

	Set $g:=f^j$ and $h := f^{j+1}$ and consider $g_{\la}$ and $h_{\la}$ for
	$\la \in \La_U$.
	(We may assume $j \ge 2$; otherwise the assertion is trivial.)
	By Theorem \ref{prop:332v}(i),
	there exist constants $K \ge 1$,  $c_1,c_2>0$ and $0 < \ep_0\le \frac{1}2$
	and functions $g_{\la,\ve}, h_{\la,\ve} \in \cH(\Om_\ep)\cap C^0(\ol \Om_\ep)$
	such that for all $0<\ep\le 2\ep_0$ and all $\la \in \La_U$,
	\begin{gather}\label{eq:ghbound}
		\max\{\|g_{\la,\ve}\|_{\Om_\ep},\|h_{\la,\ve}\|_{\Om_\ep}\} \le K,
		\\
		\label{eq:jj+1approx}
		\max\{\|g_{\la}- g_{\la,\ve}\|_{[-1,1]}, \|h_{\la}- h_{\la,\ve}\|_{[-1,1]}\} \le c_1h_{m^{(3)}}(c_2\ve).
	\end{gather}
	Using $|a^\ell - b^\ell| \le \ell \max\{|a|, |b|\}^{\ell-1}  |a - b|$, we find that
	$g_{\la,\ve}^{j+1} - h_{\la,\ve}^j \in \cH(\Om_\ep) \cap C^0(\ol \Om_\ep)$ satisfies
	\begin{align*}
		|g_{\la,\ve}^{j+1} - h_{\la,\ve}^j| &\le |g_{\la,\ve}^{j+1} - g_{\la}^{j+1}| + |h_{\la}^{j} - h_{\la,\ve}^j|
		\le c_3 h_{m^{(3)}}(c_2 \varepsilon), \quad \text{ on } [-1,1].
	\end{align*}
	The positive constant $c_3$ is again independent of $\la$ and $\ve$ as are all further constants $c_i$ appearing below.
	Thus Lemma \ref{lem:324v} implies
	\begin{equation}
	\label{eq:epshalf}
	\|h_{\la,\ve}^{j} - g_{\la,\ve}^{j+1}\|_{\Om_{\ep/2}} \le c_4 h_{m^{(4)}}(Ce c_2  \varepsilon) =: \de_\ep,
	\end{equation}
	where $C$ fulfills $C \ge \on{mg}(M^{(i)},M^{(i+1)})$ for all $2 \le i \le k-1$.
	Set $r_\ve:= \de_\ve^{\frac{1}{j+1}}$.
	We may assume that $\ep_0>0$ is chosen such that $\de_\ep \le r_\ep \le \frac{1}{2}$ for all $0<\ep\le 2\ep_0$.

	Consider the continuous function
	\[
	u_{\la,\ve} := \ph_{\ve}\frac{\ol g_{\la,\ve} h_{\la,\ve}}{\max\{|g_{\la,\ve}|, r_\ve\}^2},
	\]
	where $\ph_\ve$ is a smooth function compactly supported in $\Om_\ve$ and $1$ on $\Om_{\varepsilon/2}$.
	Note that $u_{\la,\ve}=h_{\la,\ep}/g_{\la,\ep}$ in $\Om_{\ep/2} \cap \{|g_{\la,\ep}|>r_\ep\}$.

	The uniformity of \eqref{eq:ghbound}, \eqref{eq:jj+1approx}, and \eqref{eq:epshalf}
	allows us to apply Lemmas 4.2.1 to 4.2.4 in \cite{Thilliez:2020ac} and conclude the following:
	for all $\la \in \La_U$ and $0< \ep \le 2\ep_0$,
	\begin{align} \label{eq:uepbound}
		\|u_{\la,\ep}\|_{\Om_{\ep/2}} &\le (2K)^{1/j},
		\\
		\label{eq:fuepbound}
		\|f_{\la}- u_{\la,\ep}\|_{[-1,1]} &\le c_5 r_\ve^{1/j},
	\end{align}
	where $c_5= c_5(K,j)$, cf.\ \cite[Lemma 4.2.2]{Thilliez:2020ac}.
	The bounded continuous function $v_{\la,\ep} = \cK * (\ol\p u_{\la,\ep}  \mathbf{1}_{\Om_{\ep/2}})$
	satisfies $\ol\p v_{\la,\ep} = \ol\p u_{\la,\ep}  \mathbf{1}_{\Om_{\ep/2}}$ in the distributional sense in $\C$ and
	\begin{equation} \label{eq:vepbound}
					\|v_{\la,\ep}\|_{\Om_{\ep/2}} \le c_6 \de_\ep^{1/s}
	\end{equation}
	where $s$ is any real number with $s > j(j+1)$ and $c_6= c_6(K,j,s)$, cf.\ \cite[Lemma 4.2.4]{Thilliez:2020ac} and
	also \cite[Lemma 3.1.1]{Thilliez:2020ac}.

	Then $f_{\la,\ve} := u_{\la,2\ve}-v_{\la,2\ve}$ is holomorphic in $\Om_\ep$ and continuous on $\C$.
	By \eqref{eq:uepbound}, \eqref{eq:fuepbound}, and \eqref{eq:vepbound},
	$\|f_{\la,\ep}\|_{\Om_{\ep}}$ is uniformly bounded and
	\[
	\|f_{\la}-f_{\la,\ve}\|_{[-1,1]} \le c_{7}\de_{2\ve}^{1/s}= c_7 \big(c_4 h_{m^{(4)}}(2e Cc_2  \varepsilon)\big)^{1/s},
	\]
	for all $\la \in \La_U$ and $0< \ep \le \ep_0$.
	Putting $s:=2^{k-6}=: 2^\ell$ and applying \eqref{eq:mgsquare} repeatedly,
	we find
	\begin{align*} \label{eq:final}
		\|f_{\la}-f_{\la,\ve}\|_{[-1,1]}
		\le
	 c_7 c_4^{1/s} h_{m^{(k-2)}}(2 c_2(eC)^{\ell+1}\ve).
 \end{align*}
	So, by Theorem \ref{prop:332v}(ii) (paying attention to the dependence of the constants), we may conclude that
	there is a uniform constant $D>0$ such that
	$f  \in \cB^{M^{(k)}}_{D \rh}(U)$
	as claimed.
\end{proof}

\begin{proof}[Proof of Theorem \ref{thm:main1}]
	It suffices to show that $f$ is locally of class $\cE^{[\fM]}$.
	Up to an affine transformation, we may assume that $g:= f^j$ and $h:= f^{j+1}$
	belong to $\cB^{[\fM]}(V_U)$
	and we will show that $f \in \cB^{[\fM]}(U)$, where $U$ is the open unit ball in $\R^d$.

	In the Roumieu case there exists $M^{(1)} \in \fM$ such that $g$, $h\in \cB^{\{M^{(1)}\}}(V_U)$.
	By R-admissibility of $\fM$,
	we find sequences $M^{(i)} \in \fM$ satisfying the assumptions of Proposition \ref{lem:corelemma}
	which implies that $f \in \cB^{\{M^{(k)}\}}(U)$.

	In the Beurling case we fix any $M \in \fM$ and show that $f \in \cB^{(M)}(U)$.
	Now B-admissibility of $\fM$ provides sequences $M^{(i)} \in \fM$
	as required in Proposition \ref{lem:corelemma} with $M^{(k)}=M$.
	As $g$, $h\in \cB^{(M^{(1)})}(V_U)$, Proposition \ref{lem:corelemma} gives  $f \in \cB^{(M)}(U)$.
\end{proof}

\begin{remark} \label{rem:bd1}
	Let $j$ be some positive integer. The map $\ph : \C^U \to \C^U \times \C^U$, $f \mapsto (f^j,f^{j+1})$ is injective.
	So there exists an inverse $\ps := \ph^{-1}|_{\ph(\C^U)}$.
	Under the assumptions of Theorem \ref{thm:main1}, the map $\ps$
	takes bounded sets in $\cE^{[\fM]}(U) \times \cE^{[\fM]}(U)$ to bounded sets in $\cE^{[\fM]}(U)$,
	if $\cE^{[\fM]}(U)$ is endowed with its natural locally convex topology;
	cf.\ \cite[Sec.\ 4.2]{RainerSchindl12}.
\end{remark}

\section{On Banach spaces and on convenient vector spaces} \label{sec:Banach}

\subsection{Ultradifferentiable classes of functions on Banach spaces}

Let $E$ be a Banach space and $U \subseteq E$ an open subset.
Let $\fM$ be a weight matrix.
For $M \in \fM$ and $\rh>0$ we consider $\cB^{M}_\rh(U) = \{f \in C^\infty(U,\C) : \|f\|^M_{U,\rh}<\infty\}$
as well as
\begin{align*}
		\cB^{\{\fM\}}(U) &:= \{f \in C^\infty(U,\C) : \E M \in \fM \E \rh >0 : \|f\|^M_{U,\rh}< \infty \} \quad \text{ and}
		\\
	 \cB^{(\fM)}(U) &:= \{f \in C^\infty(U,\C) : \A M \in \fM \A \rh >0 : \|f\|^M_{U,\rh}< \infty \},
\end{align*}
where $\|f\|^M_{U,\rh}$ is defined by \eqref{eq:def} (with $\R^d$ replaced by $E$).
Furthermore, we consider the local classes
\begin{align*}
	\cE^{\{\fM\}}(U) &:= \{f \in C^\infty(U,\C) : \A K \subseteq_{cp} U \E M \in \fM \E \rh >0 : \|f\|^M_{K,\rh}< \infty \}
	\quad \text{ and}
	\\
	 \cE^{(\fM)}(U) &:= \{f \in C^\infty(U,\C) : \A K \subseteq_{cp} U \A M \in \fM \A \rh >0 : \|f\|^M_{K,\rh}< \infty \}.
\end{align*}
Let $B(a,r) := \{x \in E : \|x-a\|<r\}$ be the open ball with center $a$ and radius $r$.

\begin{lemma} \label{lem:local}
	Let $f : U \to \C$ be smooth. Then:
	\begin{enumerate}
		\item $f \in \cE^{\{\fM\}}(U)$ if and only if
		\begin{equation} \label{eq:localR}
							\A a \in U \E M \in \fM  \E \rh >0 \E r>0 : f|_{B(a,r)} \in \cB^{M}_\rh(B(a,r)).
		\end{equation}
		\item $f \in \cE^{(\fM)}(U)$ if and only if
		\begin{equation} \label{eq:localB}
							\A a \in U \A M \in \fM  \A \rh >0 \E r>0 : f|_{B(a,r)} \in \cB^{M}_\rh(B(a,r)).
		\end{equation}
	\end{enumerate}
\end{lemma}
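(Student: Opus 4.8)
The plan is to prove both equivalences by treating the two implications separately: the reverse implications are the routine ``finite subcover'' arguments, while the forward implications carry the real content in the infinite dimensional setting.

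First I would dispose of the implications ``\eqref{eq:localR}$\Rightarrow f\in\cE^{\{\fM\}}(U)$'' and ``\eqref{eq:localB}$\Rightarrow f\in\cE^{(\fM)}(U)$''. Fix a compact $K\subseteq U$. Each $a\in K$ comes with a ball $B(a,r_a)$ on which $f$ is controlled, and the collection $\{B(a,r_a):a\in K\}$ covers $K$; by compactness finitely many balls $B(a_1,r_{a_1}),\dots,B(a_n,r_{a_n})$ suffice. In the Beurling case $M$ and $\rh$ are fixed throughout, so $\|f\|^M_{K,\rh}\le\max_i\|f\|^M_{B(a_i,r_{a_i}),\rh}<\infty$ and we are done. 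In the Roumieu case each ball supplies its own pair $(M_{a_i},\rh_{a_i})$; here I use that $\fM$ is \emph{totally ordered}, so the finite set $\{M_{a_1},\dots,M_{a_n}\}$ has a pointwise largest element $M$, and with $\rh:=\max_i\rh_{a_i}$ the monotonicities $M_{a_i}\le M$ and $\rh_{a_i}\le\rh$ give $\|f\|^M_{K,\rh}\le\max_i\|f\|^{M_{a_i}}_{B(a_i,r_{a_i}),\rh_{a_i}}<\infty$.

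The forward implications are the heart of the matter, and the obstacle is that in an infinite dimensional Banach space a closed ball $\ol{B(a,r)}$ is \emph{not} compact, so one cannot simply feed it into the defining condition of $\cE^{[\fM]}(U)$ as one would in $\R^d$. I would argue by contradiction, extracting a \emph{compact test sequence}. Suppose the ball condition fails at some $a\in U$. Then on every ball $B(a,1/n)$ the relevant supremum is infinite, so I can pick a point $x_n\in B(a,1/n)$ and an order $k_n$ at which the normalized derivative is large; since the singleton $\{a\}$ already gives a finite bound, the $x_n$ may be taken distinct from $a$. As $x_n\to a\in U$ and $U$ is open, the set $K:=\{a\}\cup\{x_n:n\ge n_0\}$ is compact and contained in $U$ for a suitable $n_0$, yet $f$ will violate the compact-set bound on $K$, contradicting $f\in\cE^{[\fM]}(U)$.

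The two cases differ only in how the quantifier on $M$ is handled, and this is the delicate point. In the Beurling case the pair $(M,\rh)$ is fixed by the statement, so I choose $x_n,k_n$ with $\|f^{(k_n)}(x_n)\|>n\,\rh^{k_n}M_{k_n}$; then $\|f\|^M_{K,\rh}=\infty$ contradicts $f\in\cE^{(\fM)}(U)$ directly, and the argument is completely general. In the Roumieu case I must defeat \emph{every} admissible pair at once: using the total order I select an increasing cofinal sequence $M^{(n)}\in\fM$ and arrange, via the failure on $B(a,1/n)$, that $\|f^{(k_n)}(x_n)\|>n\,n^{k_n}M^{(n)}_{k_n}$. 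Then for any $M\in\fM$ and $\rh>0$ one has $M\le M^{(n)}$ and $\rh\le n$ for all large $n$, whence $\|f\|^M_{K,\rh}=\infty$, contradicting the existence of a good pair for $K$. Throughout, smoothness enters only mildly—continuity of each $f^{(k)}$ is what lets the chosen points accumulate at $a$ while keeping $K$ compact—and I expect the genuine difficulty to be exactly this passage from compact-set bounds to a uniform ball bound, together with the cofinality bookkeeping forced by the Roumieu existential quantifier.
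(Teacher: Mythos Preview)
Your proof is correct and follows essentially the same route as the paper's: the easy directions are finite-subcover arguments exploiting the total order of $\fM$ to take $M=\max_i M_{a_i}$ in the Roumieu case, and the hard directions build a compact test set $K=\{a\}\cup\{x_n\}$ with $x_n\to a$ by contradiction, choosing $\|f^{(k_n)}(x_n)\|>n\,\rh^{k_n}M_{k_n}$ in the Beurling case and $\|f^{(k_n)}(x_n)\|>n\cdot n^{k_n}M^{(n)}_{k_n}$ in the Roumieu case. The one point to tighten is your assertion that ``using the total order I select an increasing cofinal sequence $M^{(n)}\in\fM$'': total order alone does \emph{not} guarantee a countable cofinal subset (think of $\om_1$), and the paper closes this gap by invoking \cite[Lemma~2.5]{FurdosNenningRainer}, which lets one assume from the outset that $\fM$ is indexed by $\N$ with the induced order. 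Your side remark that continuity of the $f^{(k)}$ is what makes $K$ compact is unnecessary---$K$ is compact simply as a convergent sequence together with its limit.
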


\begin{proof}
   (1)
	 That \eqref{eq:localR} is sufficient for $f \in \cE^{\{\fM\}}(U)$ is easily seen.
	 Fix a compact subset $K \subseteq U$.
	 For each $a \in K$ we get $M_a \in \fM$, $\rh_a>0$, and a small ball $B(a,r_a)$ such that
	 $f|_{B(a,r_a)} \in \cB^{M_a}_{\rh_a}(B(a,r_a))$.
	 Now $K$ is still covered by finitely many of the balls and we find $\|f\|^M_{K,\rh}< \infty$ with
	 $M:= \max M_a$ and $\rh:= \max \rh_a$, where the maxima are taken over the corresponding $a$.

	 Let us show that $f \in \cE^{\{\fM\}}(U)$ implies \eqref{eq:localR}.
	 If \eqref{eq:localR} does not hold, then
	 there exists $a \in U$ such that
	 \begin{align*}
		 &\A M \in \fM  \A \rh >0 \A r>0 \A C >0 \E x \in B(a,r) \E k \in \N :
		 \|f^{(k)}(x)\|_{L^k(E,\C)} \ge C \rh^k M_k.
	 \end{align*}
	 By \cite[Lemma 2.5]{FurdosNenningRainer}, we may assume that $\fM$ is indexed by $\N$ in the following way:
	 $\fM = \{M^{(\ell)} : \ell \in \N\}$ and $M^{(\ell_1)} \le M^{(\ell_2)}$ if and only if $\ell_1 \le \ell_2$.
	 In particular, taking $C=\rh=n$, $r = \tfrac{1}{n}$ and $M=M^{(n)}$, we find
	 \begin{align*}
		 &\A n \in \N
		 \E x_{n} \text{ with } \|x_{n} -a \|< \tfrac{1}{n}  \E k_{n} \in \N :
		 \| f^{(k_n)}(x_n)\|_{L^{k_n}(E,\C)} \ge n^{k_n+1} M^{(n)}_{k_n}.
	 \end{align*}
	 For the compact set $K := \{x_{n}\}_{n \in \N} \cup \{a\}$
	 this means that $\|f\|^M_{K,\rh}= \infty$ for all choices of $M \in \fM$ and $\rh>0$,
	 contradicting $f \in \cE^{\{\fM\}}(U)$.

	 (2)
	 That \eqref{eq:localB} is sufficient for $f \in \cE^{(\fM)}(U)$ is again easy to check.
	 For the necessity assume that \eqref{eq:localB} does not hold.
	 Then
	 there exist $a \in U$, $M \in \fM$, and $\rh>0$ such that
	 \begin{align*}
		 &\A r>0 \A C >0 \E x \in B(a,r)  \E k \in \N :
		 \|f^{(k)}(x)\|_{L^k(E,\C)} \ge C \rh^k M_k.
	 \end{align*}
	 Taking $C=n$ and $r = \tfrac{1}{n}$ we get
	 \begin{align*}
		 &\A n \in \N
		 \E x_{n} \text{ with } \|x_{n} -a \|< \tfrac{1}{n} \E k_{n} \in \N :
		 \|f^{(k_n)}(x_n)\|_{L^{k_n}(E,\C)} \ge n \, \rh^{k_n} M_{k_n}.
	 \end{align*}
	 Thus $\|f\|^M_{K,\rh}= \infty$ for $K := \{x_{n}\}_{n \in \N} \cup \{a\}$,
	 contradicting $f \in \cE^{(\fM)}(U)$.
\end{proof}

\subsection{Theorem \ref{thm:main1} holds on Banach spaces}

Let $E$ be a Banach space and $U \subseteq E$ an open subset.
Let $\mathbb B := \{x \in E : \|x\|\le 1\}$ be the closed unit ball in $E$ and $\mathbb S := \{x \in E : \|x\| =1\}$ the unit sphere.
Consider $V_U := \bigcup_{x \in U} (x + \mathbb B)$.

Then Theorem \ref{prop:332v} and Proposition \ref{lem:corelemma} remain valid for functions $f : V_U \to \C$ (with literally the same proofs).
Note that the classical Joris theorem is valid on Banach spaces by Boman's theorem.

\begin{theorem} \label{thm:Banach}
	Theorem \ref{thm:main1} (and thus Corollary \ref{cor:main1}) hold if $U$ is an open subset of a Banach space $E$.
\end{theorem}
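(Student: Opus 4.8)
The plan is to run the same top-level argument as in the proof of Theorem~\ref{thm:main1}, but to replace the finite-dimensional covering/compactness reduction by Lemma~\ref{lem:local}. The excerpt already grants that Theorem~\ref{prop:332v} and Proposition~\ref{lem:corelemma} hold verbatim for $f : V_U \to \C$ on a Banach space, and that the classical Joris theorem holds there by Boman's theorem. Hence the only genuinely new work is the local-to-global bookkeeping, together with an affine rescaling needed to fit the \emph{fixed} unit-ball thickening $V_U$ into an arbitrarily small ball.

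First I would reduce to a local statement via Lemma~\ref{lem:local}. In the Roumieu case it suffices to verify \eqref{eq:localR} for $f$: fixing $a \in U$, Lemma~\ref{lem:local}(1) applied to $f^j$ and $f^{j+1}$, together with the total order on $\fM$, yields a single $M^{(1)} \in \fM$, a $\rh_1 > 0$, and an $r > 0$ with $f^j|_{B(a,r)},\, f^{j+1}|_{B(a,r)} \in \cB^{M^{(1)}}_{\rh_1}(B(a,r))$; R-admissibility then produces the chain $M^{(i)}$ required by Proposition~\ref{lem:corelemma}. In the Beurling case I would instead fix $a$, the prescribed weight $M \in \fM$, and $\rh > 0$, use B-admissibility to build the chain $M^{(i)}$ with $M^{(k)} = M$, and only then invoke Lemma~\ref{lem:local}(2) to pull the two powers into $\cB^{M^{(1)}}_{\rh_1}(B(a,r))$ for a radius $r$ depending on the pair $(M^{(1)}, \rh_1)$ chosen with $\rh_1 \le \rh/D$, where $D$ is the constant of Proposition~\ref{lem:corelemma}.

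Next comes the rescaling. Because $\mathbb B$ is the fixed unit ball of $E$, the thickening $V_{B(0,1)}$ is the ball of radius $2$, which is not small; to bring it inside $B(a,r)$ I would apply the affine map $\Phi(x) = a + \de x$ with $\de \le r/2$, so that $\Phi(V_{B(0,1)}) = B(a,2\de) \subseteq B(a,r)$. By the chain rule the pulled-back powers $(f\circ\Phi)^j$ and $(f\circ\Phi)^{j+1}$ lie in $\cB^{M^{(1)}}_{\de\rh_1}(V_{B(0,1)})$, so the Banach version of Proposition~\ref{lem:corelemma} gives $f\circ\Phi \in \cB^{M^{(k)}}_{D\de\rh_1}(B(0,1))$. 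Undoing $\Phi$, an expansion by $1/\de$, multiplies the $k$-th derivative norms by $\de^{-k}$, so the factor $\de$ cancels and one obtains $f|_{B(a,\de)} \in \cB^{M^{(k)}}_{D\rh_1}(B(a,\de))$ with the output parameter $D\rh_1$ independent of $\de$. In the Roumieu case this is exactly \eqref{eq:localR}. In the Beurling case $M^{(k)} = M$ and, since $\rh_1 \le \rh/D$ was arranged above, $f|_{B(a,\de)} \in \cB^{M}_{\rh}(B(a,\de))$, which is \eqref{eq:localB}.

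The main obstacle is organizational rather than analytic: one must thread the quantifiers in the Beurling case in the correct order, namely that $M$ and $\rh$ are prescribed first, the admissibility chain $M^{(i)}$ and the target input parameter $\rh_1 \le \rh/D$ are chosen next, and only then is the radius $r$ (hence $\de$) produced, and one must confirm that Lemma~\ref{lem:local}(2) indeed supplies such an $r$ for the chosen $(M^{(1)}, \rh_1)$. It is also worth recording explicitly that no compactness of the unit sphere $\mathbb S \subseteq E$ enters anywhere: the estimates of Theorem~\ref{prop:332v} are uniform in the segment $\la \in \La_U$, and the polarization inequality \eqref{eq:polarization} holds on any Banach space with the same dimension-free constant $(2e)^k$ (cf.\ \cite[7.13.1]{KM97}). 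This dimension independence is precisely what lets the uniform one-dimensional machinery survive the passage to infinitely many directions.
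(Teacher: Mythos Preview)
Your proposal is correct and follows essentially the same approach as the paper's own proof: both reduce to a local statement via Lemma~\ref{lem:local}, rescale by an affine map sending $V_{B(0,1)} = B(0,2)$ into a small ball $B(a,r)$, apply the Banach version of Proposition~\ref{lem:corelemma}, and then undo the rescaling to cancel the factor $\de$. The paper simply takes $\de = r/2$ rather than a generic $\de \le r/2$, and your extra remarks on quantifier order in the Beurling case and on the dimension-free polarization constant make explicit what the paper leaves implicit.
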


\begin{proof}
	We treat Roumieu and Beurling case separately.

	\emph{Roumieu case.} Assume that $f^j,f^{j+1} \in \cE^{\{\fM\}}(U)$.
	Fix $a \in U$; without loss of generality we may assume that $a$ is the origin in $E$.
	By Lemma \ref{lem:local}, there exist $M \in \fM$, $\rh>0$, and $r>0$ such that $g:=f^j|_{B(0,r)}, h:=f^{j+1}|_{B(0,r)} \in \cB^{M}_\rh(B(0,r))$.
	Then $\tilde g(x) := g(\frac{r}{2}x)$ and $\tilde h(x) := h(\frac{r}{2}x)$ are elements of $\cB^{M}_{\frac{r}{2}\rh}(B(0,2))$;
	note that $B(0,2) = V_{B(0,1)}$.
	By R-admissibility of $\fM$,
	we find sequences $M^{(i)} \in \fM$ with $M^{(1)}=M$ satisfying the assumptions of Proposition \ref{lem:corelemma}
	which implies that the restriction of $\tilde f(x) := f(\frac{r}{2}x)$ to $B(0,1)$ belongs to $\cB^{M^{(k)}}_{D\frac{r}{2}\rh}(B(0,1))$.
	Then $f|_{B(0,\frac{r}{2})} \in \cB^{M^{(k)}}_{D\rh}(B(0,\frac{r}{2}))$ which ends the proof in view of Lemma \ref{lem:local}.

	\emph{Beurling case.} Assume that $f^j,f^{j+1} \in \cE^{(\fM)}(U)$.
	Fix $a \in U$, $M \in \fM$, and $\rh>0$; we may again assume that $a$ is the origin.
	By B-admissibility of $\fM$, there are sequences $M^{(i)} \in \fM$
	as required in Proposition \ref{lem:corelemma} with $M^{(k)}=M$.
	Let $D>0$ be the constant from Proposition \ref{lem:corelemma}.
	By Lemma \ref{lem:local}, there exists $r>0$ such that $g,h  \in \cB^{M^{(1)}}_{\frac{\rh}{D}}(B(0,r))$.
	Then $\tilde g,\tilde h  \in \cB^{M^{(1)}}_{\frac{r\rh}{2D}}(B(0,2))$
	(using the notation from the previous paragraph) and Proposition \ref{lem:corelemma} implies
	$\tilde f  \in \cB^{M^{(k)}}_{\frac{r\rh}{2}}(B(0,1))$
	and, consequently, $f|_{B(0,\frac{r}{2})}  \in \cB^{M}_{\rh}(B(0,\frac{r}2))$.
	Invoking Lemma \ref{lem:local} we are done.
\end{proof}

\subsection{Theorem \ref{thm:main1} holds on convenient vector spaces}

In \cite{KMRc,KMRq,KMRu} and \cite{Schindl14a} the calculus of $\cE^{[\fM]}$-maps has
been extended to maps between \emph{convenient vector spaces}, i.e., locally convex spaces
that are Mackey complete. We refer the reader to \cite{KM97} for a comprehensive treatment of convenient vector spaces
and the $c^\infty$-topology on them.

Let $E$ be a convenient vector space and $U \subseteq E$ a $c^\infty$-open subset. Let $\fM$ be a weight matrix.
Then $\cE^{[\fM]}(U)$ is by definition the set of all smooth functions $f : U \to \C$ (i.e.\ smooth on smooth curves in $U$)
such that for each closed absolutely convex bounded subset $B$ of $E$ we have
\[
	f \o i_B \in \cE^{[\fM]}(U_B),
\]
where $i_B : E_B \to E$ is the inclusion of the linear span $E_B$ of $B$ in $E$ and $U_B := U \cap E_B$.
Note that $E_B$ equipped with the Minkowski functional $p_B(x) = \inf \{t>0 : x \in t B\}$
is a Banach space, since $E$ is convenient.

\begin{theorem} \label{thm:convenient}
	Theorem \ref{thm:main1} (and thus Corollary \ref{cor:main1}) hold if $U$ is an $c^\infty$-open subset of a convenient vector space $E$.
\end{theorem}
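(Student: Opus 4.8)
The plan is to reduce the convenient-vector-space statement to the Banach-space case already established in Theorem~\ref{thm:Banach}, exploiting the very definition of $\cE^{[\fM]}$ on convenient vector spaces together with the fact that the Joris condition is preserved under the relevant restrictions. First I would observe that the hypotheses and conclusion of Theorem~\ref{thm:main1} are purely compositional: if $f : U \to \C$ satisfies $f^j, f^{j+1} \in \cE^{[\fM]}(U)$, then for each closed absolutely convex bounded subset $B \subseteq E$ the restrictions $(f \o i_B)^j = f^j \o i_B$ and $(f \o i_B)^{j+1} = f^{j+1} \o i_B$ lie in $\cE^{[\fM]}(U_B)$, since composition with the bounded linear inclusion $i_B : E_B \to E$ preserves membership in the class by definition. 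Here $E_B$, equipped with the Minkowski functional $p_B$, is a Banach space because $E$ is convenient, and $U_B = U \cap E_B$ is open in $E_B$.

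Next I would apply Theorem~\ref{thm:Banach} to the Banach space $E_B$, the open set $U_B$, and the function $f \o i_B : U_B \to \C$. Since $(f \o i_B)^j$ and $(f \o i_B)^{j+1}$ both belong to $\cE^{[\fM]}(U_B)$, the Banach-space version of Joris's theorem yields $f \o i_B \in \cE^{[\fM]}(U_B)$. As $B$ was an arbitrary closed absolutely convex bounded subset of $E$, this establishes $f \o i_B \in \cE^{[\fM]}(U_B)$ for every such $B$, which is exactly the defining condition for $f \in \cE^{[\fM]}(U)$ in the convenient setting. Thus the conclusion follows once smoothness of $f$ itself is handled.

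The one step requiring genuine care—and the main obstacle—is verifying that $f$ is smooth as a map on the convenient vector space $E$, i.e.\ smooth along smooth curves in $U$, rather than merely smooth after restriction to each $E_B$. The classical Joris theorem gives smoothness of $f \o i_B$ on each $U_B$, but one must promote this to $c^\infty$-smoothness of $f$ on $U$. The clean way is to use the fact that in convenient calculus a map is smooth precisely when its composites with all smooth curves $c : \R \to U$ are smooth, and every smooth curve factors (locally, on bounded intervals) through some $E_B$: the image of a compact interval under a smooth curve is bounded, hence contained in the span of a suitable closed absolutely convex bounded $B$, with the curve smooth into $E_B$. Therefore $f \o c = (f \o i_B) \o \tilde c$ is smooth for the corresponding factored curve $\tilde c$, giving smoothness of $f$. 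A subtlety is that $f$ is a priori only a function (the Joris hypothesis applies to $f^j, f^{j+1}$), so one must first know $f$ is well-defined and continuous enough to make sense of these composites; this is where the injectivity observation of Remark~\ref{rem:bd1} together with the curve-wise reduction closes the gap. Once smoothness is in hand, the curve-wise and $E_B$-wise arguments combine to give $f \in \cE^{[\fM]}(U)$, completing the proof.
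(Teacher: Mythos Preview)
Your proposal is correct and follows exactly the paper's route: reduce to the Banach case via the very definition of $\cE^{[\fM]}(U)$ on convenient vector spaces, applying Theorem~\ref{thm:Banach} on each $E_B$. The paper's own proof is a single line (``obvious by the definition of $\cE^{[\fM]}(U)$ and Theorem~\ref{thm:Banach}''); your extended discussion of smoothness is not wrong, but it is overstated as the ``main obstacle''---once each $f\circ i_B$ is smooth on $U_B$, convenient smoothness of $f$ is immediate from the standard fact that smooth curves locally factor smoothly through some $E_B$, and the appeal to Remark~\ref{rem:bd1} is unnecessary.
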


\begin{proof}
	This is obvious by the definition of $\cE^{[\fM]}(U)$ and Theorem \ref{thm:Banach}.
\end{proof}

\section{More general nonlinear conditions} \label{sec:nonlinear}

As seen above the map
$\Ph(t) = (t^j,t^{j+1})$, where $j$ is some positive integer, has the property
\begin{equation} \label{eq:pseudoimmersive}
	\Ph \o f \in \cE^{[\fM]} \implies f \in \cE^{[\fM]},
\end{equation}
provided that $\fM$ is an [admissible] weight matrix.
Here $t$ is a complex or a real variable.
Equivalently, the map $\Ph(t) = (t^p,t^{q})$, where $p$ and $q$ are positive coprime integers,
has property \eqref{eq:pseudoimmersive}; indeed, all integers $j \ge pq$ have the form $j = ap+bq$ with $a,b \in \N$.

The question arises which maps $\Ph$ have the property \eqref{eq:pseudoimmersive} for all continuous $f$. From now on
we take continuity of $f$ as a basic assumption.
(The continuity of $f$ was not an issue in Theorem \ref{thm:main1}, since one of the powers is necessarily odd and hence
has a global continuous inverse.)
This problem was investigated in \cite{DuncanKrantzParks85} and in \cite{JorisPreissmann87}
in the $C^\infty$-setting. In the latter article the smooth germs $\Ph : (\R,0) \to (\R^n,0)$
(and $\Ph : (\C,0) \to (\C^n,0)$ which possess a complex Taylor series, cf.\ \cite[p.204]{JorisPreissmann87})
with the property that $\Ph \o f \in C^\infty$ implies $f \in C^\infty$ for all continuous germs $f$ with $f(0)=0$
were characterized in terms of a condition on the support of the formal Taylor series of $f$.
They were called \emph{pseudo-immersive} germs.

The proof is based (similar to most proofs in \cite{DuncanKrantzParks85}) on a reduction to the
case $\Ph(t) = (t^p,t^q)$ with $\on{gcd}(p,q)=1$ which is mostly algebraic and hence applies to our situation with marginal adjustments.
We thus obtain a characterization of the \emph{analytic} germs $\Ph$ satisfying \eqref{eq:pseudoimmersive}
for all continuous germs $f$ with $f(0)=0$ by the same condition on the support of the Taylor series.
Note that [admissibility] of the weight matrix $\fM$ implies that $\cE^{[\fM]}$ contains all analytic maps
and is stable under composition (by Corollaries 3.3 and 3.5 and Proposition 1.1 in \cite{FurdosNenningRainer}).
See also Remark~\ref{rem:ultra}.

\subsection{A necessary condition}

Let $\K$ be $\R$ or $\C$.
Let $\Ph  : (\K,0) \to (\K^n,0)$ be the germ of a non-zero analytic map.
Then $\Ph$ is represented by its convergent Taylor series
\begin{equation*}
	\Ph(t) = \sum_{1 \le k < N} a_k t^{n_k}, \quad \text{ where } a_k \in \K^n \setminus \{0\}\text{ for all }k,
\end{equation*}
$1 \le n_1 < n_2 < \cdots$,
and $1 \le N \le \infty$. That means that $\{n_1,n_2,\ldots\}$ is the \emph{support} of the power series $\Ph(t)$.

If \eqref{eq:pseudoimmersive} holds for all continuous germs $f$ with $f(0)=0$, then $\on{gcd}(n_1, n_2, \ldots ) =1$.
Indeed, if there are integers $p\ge 2$ and $\ell_k$ such that $n_k = p \ell_k$ for all $k$,
then the power series $\sum_{k} a_k x^{\ell_k}$ is convergent, defines an analytic germ $\Ps$ so that $\Ph(t) = \Ps(t^p)$,
and it is easy to find a continuous germ $f \not\in C^1$ with $f(0)=0$ such that $\Ph \o f$ is analytic;
cf.\ \cite[Theorem 2]{JorisPreissmann87}.

\subsection{A characterization}

We will see that the condition $\on{gcd}(n_1, n_2, \ldots ) =1$ is also sufficient for \eqref{eq:pseudoimmersive}
at least up to equivalence of analytic germs.
Two analytic germs $\Ph,\Ps : (\K,0) \to (\K^n,0)$ are said to be \emph{equivalent} if there exist
germs of analytic diffeomorphisms $u : (\K^n,0) \to (\K^n,0)$ and $v : (\K,0) \to (\K,0)$ such that
$u \o \Ph \o v = \Ps$.
Equivalent germs either both satisfy or do not satisfy \eqref{eq:pseudoimmersive}.
Any non-zero analytic germ $\Ph : (\K,0) \to (\K^n,0)$ is
equivalent to a germ whose first component is a positive power.
So it is no restriction to assume $\Ph_1(t) = t^p$.

\begin{theorem} \label{thm:main2}
	Let $\Ph = (\Ph_1,\ldots,\Ph_n) : (\K,0) \to (\K^n,0)$ be an analytic germ such that
	$\Ph_1(t) = t^p$
	and let $\{n_1,n_2,\ldots\}$ be the support of the power series $\Ph(t)$.
	Then the following conditions are equivalent:
	\begin{enumerate}
		\item
		Let $\fM$ be an [admissible] weight matrix.
		If $f : (\R^d,0) \to (\K,0)$ is a function germ and $\Ph \o f$ is of class $\cE^{[\fM]}$, then
		$f$ is of class $\cE^{[\fM]}$.
		\item
		Let $\om$ be a concave weight function.
		If $f : (\R^d,0) \to (\K,0)$ is a function germ and $\Ph \o f$ is of class $\cE^{[\om]}$, then
		$f$ is of class $\cE^{[\om]}$.
		\item $\on{gcd}(n_1, n_2, \ldots ) =1$.
	\end{enumerate}
\end{theorem}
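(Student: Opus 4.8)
The plan is to prove the cycle $(1)\Rightarrow(2)\Rightarrow(3)\Rightarrow(1)$. The implication $(1)\Rightarrow(2)$ is a mere specialization: every concave weight function $\om$ gives rise to an [admissible] weight matrix $\fM$ with $\cE^{[\om]}=\cE^{[\fM]}$ (this is exactly the mechanism behind Corollary~\ref{cor:main1}; see \cite[Section 4.5]{Nenning:2021wd}), so applying $(1)$ to this $\fM$ yields $(2)$. For $(2)\Rightarrow(3)$ I argue by contraposition. If $\on{gcd}(n_1,n_2,\ldots)=g\ge 2$, then, as recorded in the discussion of the necessary condition above, $\Ph(t)=\Ps(t^g)$ for an analytic germ $\Ps$, and there is a continuous germ $f\not\in C^1$ with $f(0)=0$ such that $\Ph\o f$ is analytic. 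Fixing any concave weight function $\om$ (for instance $\om(t)=\sqrt t$), [admissibility] of the associated matrix gives that every analytic germ, and in particular $\Ph\o f$, lies in $\cE^{[\om]}$, whereas $f\not\in C^1\supseteq\cE^{[\om]}$. This contradicts $(2)$.

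The substance is $(3)\Rightarrow(1)$, and here the plan is to reduce to the already established coprime two-power case. Throughout I use that, by [admissibility] and Corollaries~3.3 and~3.5 and Proposition~1.1 in \cite{FurdosNenningRainer}, the class $\cE^{[\fM]}$ contains all analytic germs and is stable both under composition with analytic germs and under the ring operations; consequently, replacing $\Ph$ by an equivalent germ $u\o\Ph\o v$ neither creates nor destroys property \eqref{eq:pseudoimmersive}, exactly as noted before the statement. First I dispose of the trivial case $p=1$, where $\Ph_1\o f=f\in\cE^{[\fM]}$ outright. For $p\ge 2$, assuming $\Ph\o f\in\cE^{[\fM]}$, I compose with a linear projection $\K^n\to\K^2$ that preserves the first coordinate and is generic in the second; this produces $(t^p,\psi)\o f\in\cE^{[\fM]}$ with $\psi(t)=\sum_k c_k t^{n_k}$, all $c_k\ne 0$, so that $\operatorname{supp}\psi=\{n_1,n_2,\ldots\}$ and hence $\on{gcd}(\{p\}\cup\operatorname{supp}\psi)=1$ by $(3)$. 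Since this composite is $P\o(\Ph\o f)$ with $P$ linear, it lies in $\cE^{[\fM]}$, and it therefore suffices to prove that the planar germ $(t^p,\psi)$ has property \eqref{eq:pseudoimmersive}.

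Next I invoke the algebraic reduction of \cite{JorisPreissmann87} (compare \cite{DuncanKrantzParks85}): under the support condition $\on{gcd}(\{p\}\cup\operatorname{supp}\psi)=1$ the germ $(t^p,\psi)$ reduces, up to equivalence, to $(t^p,t^q)$ with $\on{gcd}(p,q)=1$. Concretely, one first removes from $\psi$ all monomials whose exponent is divisible by $p$ by means of a target diffeomorphism $u(y_1,y_2)=(y_1,y_2-\chi(y_1))$ — legitimate here since $\psi\o f-\chi(f^p)$ stays in $\cE^{[\fM]}$ — and then eliminates the higher-order terms by an iterative descent on the exponents, combining source and target analytic diffeomorphisms; the condition $\on{gcd}(\{p\}\cup\operatorname{supp}\psi)=1$ is precisely what forces this descent to terminate at coprime exponents $p,q$. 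Once reduced to $(t^p,t^q)$ with $\on{gcd}(p,q)=1$, the hypothesis provides $f^p,f^q\in\cE^{[\fM]}$; since every sufficiently large integer has the form $ap+bq$ with $a,b\in\N$, forming products yields $f^j,f^{j+1}\in\cE^{[\fM]}$ for some $j$, and Theorem~\ref{thm:main1} delivers $f\in\cE^{[\fM]}$. Tracing back through the (class-preserving) equivalences then gives the original $f\in\cE^{[\fM]}$.

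The main obstacle is this last reduction: I must verify that every step of the \cite{JorisPreissmann87} normalization, designed for $C^\infty$, survives verbatim in $\cE^{[\fM]}$. This is exactly where [admissibility] is used. Because the manipulations consist solely of forming $\K$-linear combinations of components, composing with fixed analytic germs $u,v$, and passing between $\Ph\o f$ and $f$ through these equivalences, the stability properties quoted above keep every intermediate object inside $\cE^{[\fM]}$; so the reduction carries over with only the ``marginal adjustments'' alluded to in the introduction, and no new analytic input beyond Theorem~\ref{thm:main1} is needed.
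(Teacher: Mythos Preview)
Your handling of $(1)\Rightarrow(2)\Rightarrow(3)$ and the reduction to the planar case $(t^p,\psi)$ is fine and matches the paper. The gap is in the step where you claim that $(t^p,\psi)$ ``reduces, up to equivalence, to $(t^p,t^q)$ with $\gcd(p,q)=1$'' via source and target analytic diffeomorphisms. This is simply false in general. Take $p=4$ and $\psi(t)=t^6+t^9$; the support is $\{4,6,9\}$ with $\gcd=1$, but the parametrized plane curve $(t^4,t^6+t^9)$ has two characteristic (Puiseux) exponents $6$ and $9$, while any $(t^4,t^q)$ with $\gcd(4,q)=1$ has a single one. The number of Puiseux pairs is an analytic (even topological) invariant of the plane curve germ, so no pair of analytic diffeomorphisms $u,v$ can take one to the other. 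Your ``iterative descent'' breaks down precisely because, once exponents with $\gcd>1$ with $p$ are present together with a coprime one, you cannot kill them by diffeomorphisms without destroying the first component.

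What \cite{JorisPreissmann87} actually do --- and what the paper adapts --- is not an equivalence reduction but an \emph{algebraic identity}: one shows that $\vh(z)$ has degree exactly $p$ over the field $\K\{\!\{z^p\}\!\}$ (this is where $\gcd(p,n_1,n_2,\ldots)=1$ enters, via a Galois/roots-of-unity argument), and from this one extracts analytic germs $\al_j$ and an integer $q\ge 1$ with
\[
t^{1+pq}=\sum_{j=0}^{p-1}\al_j(t^p)\,\vh(t)^j.
\]
Substituting $f$ and using that $f^p,\vh(f)\in\cE^{[\fM]}$ together with closure under the ring operations and composition with analytic germs, one gets $f^{1+pq}\in\cE^{[\fM]}$; since also $f^{pq}=(f^p)^q\in\cE^{[\fM]}$, Theorem~\ref{thm:main1} applies to the consecutive powers $pq$ and $pq+1$. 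So the correct route still ends at Theorem~\ref{thm:main1}, but it goes through this identity rather than through an (nonexistent) normal form.
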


\begin{proof}
	Since (2) is a special case of (1) (cf.\ \cite[Section 4.5]{Nenning:2021wd}) and the
	necessity of (3) for (2) follows from the discussion above,
	it remains to show that (3) implies (1).
	This is achieved by adjusting the arguments in the proofs of Theorems 3 and 3' in \cite{JorisPreissmann87};
 	actually the proof simplifies a bit, since all series involved in the arguments are convergent.

  We may assume that $p \ge 2$ and $n=2$.
	In fact, if $p=1$ there is nothing to prove and if $n>2$ then one
	can find real constants $\ga_i$ such that the support of the power series
	$(t^p,\ga_2 \Ph_2(t)+\cdots + \ga_n \Ph_n(t))$ satisfies (3).

	Thus we may assume that $\Ph(t) = (t^p, \vh(t))$, where $\ph(t) = \sum_{1 \le k < N} a_k t^{n_k}$
	with $a_k \in \K \setminus \{0\}$ for all $k$.
	Then (3) takes the form $\on{gcd}(p,n_1, n_2, \ldots ) =1$.
	The idea is to show that there exist a positive integer $q$ and analytic germs $\al_j$ at $0$ such that
	\begin{equation} \label{eq:key}
		t^{1+pq} = \sum_{j=0}^{p-1} \al_j(t^p) \vh(t)^j.
	\end{equation}
	Then (1) follows from Theorem \ref{thm:main1} (by plugging $f$ into \eqref{eq:key}).

	We will sketch how to establish \eqref{eq:key} following the steps in \cite{JorisPreissmann87};
	the main difference is that in \cite{JorisPreissmann87} all involved series are formal (power or Laurent) series
	so that at the end an application of Borel's lemma and some handling of the flat terms is required.

	Let $z$ be an indeterminate. Recall that $\K\{z\}$ denotes the ring of convergent power series in $z$ with coefficients in $\K$.
	The convergent power series
	$\vh(z) = \sum_{1 \le k < N} a_k z^{n_k}$ has the form
	\[
		\vh(z) = \vh_0(z^p) + z \vh_1(z^p) + \cdots + z^{p-1} \vh_{p-1}(z^p)
	\]
	where $\vh_j(z^p) \in \K\{z^p\}$ for all $j =0,\ldots,p-1$.
	Introducing another independent indeterminate $x$, set
	\[
		S(x) := \vh_0(z^p) + x \vh_1(z^p) + \cdots + x^{p-1} \vh_{p-1}(z^p) \in \K\{z^p\}[x].
	\]
	Furthermore, define
	\begin{align*}
		 G_{kj}(x) := S(x)^j x^{kp}, \quad j = 0,1, \ldots, p-1,~ k = 0,1,\ldots,(p-1)^2.
	\end{align*}

	Let $\K\{\!\{z\}\!\}$ denote the field of fractions of $\K\{z\}$. It is easy to see that
	$\K\{\!\{z\}\!\} = \{z^n g(z) : n \in \Z, \, g(z) \in \K\{z\}\}$.
	We look for nontrivial $a_{kj} \in \K\{\!\{z^p\}\!\}$ such that
	\[
		\sum_{j=0}^{p-1} \sum_{k=0}^{(p-1)^2} a_{kj} G_{kj}(x) = xH(x^p)
	\]
	with $H(x^p) \in \K\{\!\{z^p\}\!\}[x^p]$. This amounts to an underdetermined homogeneous system of linear equations
	over the field $\K\{\!\{z^p\}\!\}$; see \cite[p.201]{JorisPreissmann87} for details.
	So we find
	$a_{kj} \in \K\{\!\{z^p\}\!\}$ for $j=0,1,\ldots,p-1$, $k=0,1,\ldots,r$, and some $r\ge 0$, not all zero,
	and $H(x^p) \in \K\{\!\{z^p\}\!\}[x^p]$ such that
	\[
		\sum_{j=0}^{p-1} \sum_{k=0}^{r} a_{kj} G_{kj}(x) = xH(x^p).
	\]
	Regrouping the terms on the left-hand side, we obtain
	$A_j(x^p) \in \K\{\!\{z^p\}\!\}[x^p]$, $j=0,1,\ldots,p-1$, not all zero,
	such that
	\begin{equation} \label{eq:algebraic}
				\sum_{j=0}^{p-1}  A_{j}(x^p) S(x)^{j} = xH(x^p).
	\end{equation}
	Taking $r$ as small as possible, at least one of the series $A_0(z^p)$, ... , $A_{p-1}(z^p)$,
	which are obtained by replacing $x^p$ with $z^p$, is non-zero and also $H(z^p)$ is non-zero.
	Indeed, if $H(z^p)=0$ then \eqref{eq:algebraic} shows that $\vh(z)  = S(z)$ is algebraic over
	$\K\{\!\{z^p\}\!\} \subseteq \K(\!(z^p)\!) \subseteq \C(\!(z^p)\!)$
	of degree at most $p-1$; here $\K(\!(z)\!)$ is the field of formal Laurent series in $z$ with coefficients in $\K$.
	The automorphism $\si$ of $\C(\!(z)\!)$ induced by $z \mapsto \xi z$, where $\xi = e^{2\pi i/p}$, leaves invariant $\C(\!(z^p)\!)$ and
	generates the cyclic group $\langle\si\rangle$.
	Thus $\vh(z)$ (as an element of $\C(\!(z)\!)$) is invariant under a non-trivial subgroup of $\langle\si\rangle$,
	say $\langle\si^b\rangle$ with $1 \le b \le p-1$ and $b|p$.
	Then
	\[
		\sum_{1 \le k < N} a_k z^{n_k} = \sum_{1 \le k < N} a_k \xi^{b n_k}z^{n_k}
	\]
	which implies $\xi^{b n_k} =1$ for all $k$, since all $a_k \ne 0$.
	But that means that $p/b$ divides all $n_k$, contradicting $\on{gcd}(p,n_1, n_2, \ldots ) =1$.
	Cf.\ \cite[p.202]{JorisPreissmann87}.

	Multiplying \eqref{eq:algebraic} by a suitable power of $z^p$, we may suppose that the coefficients of $A_j$ and $H$
	belong to $z^p \K\{z^p\}$.
	Substituting $z$ for $x$, we find
	\[
		\sum_{j=0}^{p-1}  A_{j}(z^p) \vh(z)^j = zH(z^p)
	\]
	with $A_j(z^p)$ and $H(z^p)$ belonging to $z^p \K\{z^p\}$. Since $H(z^p) \ne 0$,
	there exist $q \in \N_{\ge 1}$,  $c \in \K \setminus \{0\}$, and $H_1(z^p) \in  \K\{z^p\}$ such that
	$H(z^p) = z^{pq}(c+ z^p H_1(z^p))$
	from which it is easy to conclude \eqref{eq:key}.
\end{proof}

\begin{remark}
	Let $\Ph$ be as in Theorem \ref{thm:main2} with $\on{gcd}(n_1, n_2, \ldots ) =1$
	and assume that $\fM$ is an [admissible] weight matrix.
	Then the map $\cE^{[\fM]} \ni \Ph \o f \mapsto f \in \cE^{[\fM]}$ in (1) takes bounded sets to bounded sets.
	This follows from Remark \ref{rem:bd1} and \eqref{eq:key}, since the superposition operator $g \mapsto h \o g$ is bounded,
	even continuous, among
	$\cE^{[\fM]}$ (in finite dimensions), provided that $\cE^{[\fM]}$ is stable under composition;
	see \cite[Theorem 4.13]{RainerSchindl12}.
\end{remark}

\begin{remark}
	\label{rem:sharp}
	Let $g$ be a continuous germ at $0 \in \R$ such that $g(0) =1$.
	Then $f := 1/g$ and $h := f - 1$ are continuous germs and $h(0) =0$.
	Consider the analytic germ $\vh(t) = \frac{1}{1+t} - 1 =  \sum_{k \ge 1} (-1)^k t^k$.
	Then $\vh \o h = g - 1$.
	Any analytic germ $\Ph : (\R,0) \to (\R^n,0)$ that has $\vh$ as a component evidently satisfies the condition \ref{thm:main2}(3).
	Now if $g$ is of class $\cE^{[\fM]}$ and \eqref{eq:pseudoimmersive} holds, then
	$h$ is of class $\cE^{[\fM]}$ and consequently also $f=1/g$.
	That means that \eqref{eq:pseudoimmersive} for analytic $\Ph$ implies that $\cE^{[\fM]}$ is \emph{inverse-closed}.

	In view of \cite[Theorem 4.8]{FurdosNenningRainer} we infer that concavity of the weight function $\om$ (up to equivalence)
	is a necessary condition
	for the conclusions of Corollary \ref{cor:main1} and Theorem \ref{thm:main2}(2).

	Next we want to discuss implications of the conclusions of Theorem \ref{thm:main1} and Theorem \ref{thm:main2}(1)
	and compare them with the assumption of admissibility for the weight matrix $\fM$;
	our goal is to convince the reader that admissibility is not only sufficient but also ``close to'' necessary.
	Suppose that
	\begin{equation} \label{eq:Com}
	 m_k^{1/k}\to \infty \quad \text{ for all } M \in \fM;
	\end{equation}
	this guarantees that $\cE^{[\fM]}$ contains the real analytic class.
	Consider also the conditions
	\begin{align} \label{eq:dc}
		\begin{split}
			\A M \in \fM \E N \in \fN &: \sup_{k} \Big(\frac{M_{k+1}}{N_k}\Big)^{1/k} < \infty \qquad \text{ in the Roumieu case,}
			\\
			\A M \in \fM \E N \in \fN &: \sup_{k} \Big(\frac{N_{k+1}}{M_k}\Big)^{1/k} < \infty \qquad \text{ in the Beurling case,}
		\end{split}
	\end{align}
	which characterize stability under derivation of $\cE^{[\fM]}$.

	\begin{description}
		\item[Fact i] Assuming \eqref{eq:dc}, the conclusions of Theorem \ref{thm:main1} and Theorem \ref{thm:main2}(1) imply that $\cE^{[\fM]}$ is stable under composition.
	\end{description}

	Indeed, we saw that \eqref{eq:pseudoimmersive} for analytic $\Ph$ entails that $\cE^{[\fM]}$ is inverse-closed which in turn implies (see \cite[Theorem 4.9 and 4.11]{RainerSchindl12}
	and \cite{RainerSchindl14})
	\begin{align} \label{eq:almostincreasing}
		\begin{split}
			\A M \in \fM \E N \in \fN &: \sup_{j\le k} \frac{m_j^{1/j}}{n_k^{1/k}} < \infty \qquad \text{ in the Roumieu case,}
			\\
			\A M \in \fM \E N \in \fN &: \sup_{j\le k} \frac{n_j^{1/j}}{m_k^{1/k}} < \infty \qquad \text{ in the Beurling case.}
		\end{split}
	\end{align}
	Now \eqref{eq:almostincreasing}, in conjunction with \eqref{eq:dc},
	yields that $\cE^{[\fM]}$ is stable under composition (\emph{loc.\!\! cit.}).
	Conversely, since $\cE^{[\fM]}$ contains the real analytic class (thanks to \eqref{eq:Com}),
	stability under composition entails inverse-closedness and hence \eqref{eq:almostincreasing}.
	Note that \eqref{eq:dc} follows from the moderate growth part in the definition
	of an [admissible] weight matrix.

	\begin{description}
		\item[Fact ii] Assuming \eqref{eq:Com} and \eqref{eq:dc},
		the other part in the definition of admissibility, i.e.\ the control of $\ol \Ga_n$ by $\ul \Ga_m$ (resp.\ $\ol \Ga_m$ by $\ul \Ga_n$),
		also implies stability under composition.
	\end{description}

	This follows from almost analytic extension (see \cite[Remark 2.7]{FurdosNenningRainer}).
	But the property ``control of $\ol \Ga_n$ by $\ul \Ga_m$'' seems not to be \emph{substantially} stronger than stability under composition.
	This is evidenced by Fact iii and Fact iv.

	\begin{description}
		\item[Fact iii] For any weight function $\om$,
		the class $\cE^{[\om]}$ being stable under composition implies the existence of a weight matrix
		$\fM$ with $\cE^{[\om]} = \cE^{[\fM]}$ such that
		$m$ is log-convex and thus
		$\ol \Ga_m = \ul \Ga_m$ for all $M \in \fM$.
	\end{description}

	See \cite[Theorem 4.8]{FurdosNenningRainer} and \cite[Proposition 3]{Rainer:2020aa});
	in this case \eqref{eq:Com} and moderate growth (hence \eqref{eq:dc}) are automatic.

	\begin{description}
		\item[Fact iv]
		If a general weight matrix $\fM$ fulfills \eqref{eq:Com}, \eqref{eq:almostincreasing}, and
		\begin{align} \label{eq:qr}
			\begin{split}
				\A M \in \fM \E N \in \fN  &: \sup_k \frac{m_k}{m_{k-1}} n_k^{-1/k} < \infty \qquad \text{ in the Roumieu case,}
				\\
				\A M \in \fM \E N \in \fN &: \sup_k \frac{n_k}{n_{k-1}} m_k^{-1/k} < \infty \qquad \text{ in the Beurling case,}
			\end{split}
		\end{align}
		then $\fM$ admits the desired control of $\ol \Ga_n$ by $\ul \Ga_m$.
	\end{description}

	Indeed, in the Roumieu case, for given $M \in \fM$ there exist $L,N \in \fM$ and constants $C_i>0$ such that for all $j \le k$,
	\[
			\frac{m_j}{m_{j-1}} \stackrel{\eqref{eq:qr}}{\le} C_1 \ell_j^{1/j} \stackrel{\eqref{eq:almostincreasing}}{\le} C_2  n_k^{1/k} \le C_3 \frac{n_k}{n_{k-1}};
	\]
	the last inequality it true for every weight sequence.
	It follows that $\ol \Ga_n(C_3 t) \le \ul \Ga_m(t)$ for all $t>0$ (see \cite[Lemma 3.3]{Rainer:2019ac}).
	The Beurling case
 is analogous.
\end{remark}

\begin{remark}
	\label{rem:ultra}
	In principle, one can apply the above method also to the more general case that $\Ph$ is the germ of a $\cE^{[\fM]}$-mapping,
	but the ramification $z^p$ causes a loss of regularity and, since the series are formal,
	an ultradifferentiable Borel lemma must be used
	which introduces flat terms whose handling again entails a loss of regularity.
\end{remark}


\begin{thebibliography}{10}

\bibitem{Boman67}
J.~Boman, \emph{Differentiability of a function and of its compositions with
  functions of one variable}, Math. Scand. \textbf{20} (1967), 249--268.

\bibitem{DuncanKrantzParks85}
J.~Duncan, S.~G. Krantz, and H.~R. Parks, \emph{Nonlinear conditions for
  differentiability of functions}, J. Analyse Math. \textbf{45} (1985), 46--68.

\bibitem{FurdosNenningRainer}
S.~F{{\"u}}rd{{\"o}}s, D.~N. Nenning, A.~Rainer, and G.~Schindl, \emph{Almost
  analytic extensions of ultradifferentiable functions}, J. Math. Anal. Appl.
  \textbf{481} (2020), no.~1, doi:10.1016/j.jmaa.2019.123451.

\bibitem{Joris82}
H.~Joris, \emph{Une {${\mathcal C}\sp{\infty }$}-application non-immersive qui
  poss\`ede la propri\'et\'e universelle des immersions}, Arch. Math. (Basel)
  \textbf{39} (1982), no.~3, 269--277.

\bibitem{JorisPreissmann87}
H.~Joris and E.~Preissmann, \emph{Pseudo-immersions}, Ann. Inst. Fourier
  (Grenoble) \textbf{37} (1987), no.~2, 195--221.

\bibitem{Komatsu73}
H.~Komatsu, \emph{Ultradistributions. {I}. {S}tructure theorems and a
  characterization}, J. Fac. Sci. Univ. Tokyo Sect. IA Math. \textbf{20}
  (1973), 25--105.

\bibitem{KM97}
A.~Kriegl and P.~W. Michor, \emph{The convenient setting of global analysis},
  Mathematical Surveys and Monographs, vol.~53, American Mathematical Society,
  Providence, RI, 1997, \url{http://www.ams.org/online\_bks/surv53/}.

\bibitem{KMRc}
A.~Kriegl, P.~W. Michor, and A.~Rainer, \emph{The convenient setting for
  non-quasianalytic {D}enjoy--{C}arleman differentiable mappings}, J. Funct.
  Anal. \textbf{256} (2009), 3510--3544.

\bibitem{KMRq}
\bysame, \emph{The convenient setting for quasianalytic {D}enjoy--{C}arleman
  differentiable mappings}, J. Funct. Anal. \textbf{261} (2011), 1799--1834.

\bibitem{KMRu}
\bysame, \emph{The convenient setting for {D}enjoy--{C}arleman differentiable
  mappings of {B}eurling and {R}oumieu type}, Rev. Mat. Complut. \textbf{28}
  (2015), no.~3, 549--597.

\bibitem{Nenning:2021wd}
D.~N. Nenning, A.~Rainer, and G.~Schindl, \emph{Nonlinear conditions for
  ultradifferentiability}, {J. Geom. Anal.} (2021),
  doi:10.1007/s12220-021-00718-w.

\bibitem{RainerSchindl12}
A.~Rainer and G.~Schindl, \emph{Composition in ultradifferentiable classes},
  Studia Math. \textbf{224} (2014), no.~2, 97--131.

\bibitem{RainerSchindl14}
\bysame, \emph{Equivalence of stability properties for ultradifferentiable
  function classes}, Rev. R. Acad. Cienc. Exactas Fis. Nat. Ser. A Math.
  RACSAM. \textbf{110} (2016), no.~1, 17--32.

\bibitem{Rainer:2019ac}
\bysame, \emph{On the extension of {W}hitney ultrajets}, Studia Math.
  \textbf{245} (2019), no.~3, 255--287.

\bibitem{Rainer:2020aa}
\bysame, \emph{On the extension of {W}hitney ultrajets, {II}}, Studia Math.
	 \textbf{250} (2020), no.~3, 283--295.

\bibitem{Schindl14a}
G.~Schindl, \emph{The convenient setting for ultradifferentiable mappings of
  {B}eurling- and {R}oumieu-type defined by a weight matrix}, Bull. Belg. Math.
  Soc. Simon Stevin \textbf{22} (2015), no.~3, 471--510.

\bibitem{Thilliez:2020ac}
V.~Thilliez, \emph{Functions with ultradifferentiable powers}, Results Math.
  \textbf{75} (2020), no.~3, Paper No. 79, p. 18.

\end{thebibliography}

\def\cprime{$'$}
\providecommand{\bysame}{\leavevmode\hbox to3em{\hrulefill}\thinspace}
\providecommand{\MR}{\relax\ifhmode\unskip\space\fi MR }
\providecommand{\MRhref}[2]{%
  \href{http://www.ams.org/mathscinet-getitem?mr=#1}{#2}
}
\providecommand{\href}[2]{#2}

\end{document}